\providecommand\@dotsep{5}
\def\listtodoname{List of Todos}
\def\listoftodos{\@starttoc{tdo}\listtodoname}
\newcommand{\R}{\mathbb{R}}
\newcommand{\RT}{{\mathbb{R}^3}}
\newcommand{\de}{\partial}
 \DeclareMathOperator{\dv}{div}
\renewcommand{\le}{\leslant}
\renewcommand{\ge}{\geslant}
\renewcommand{\a }{\alpha }
\renewcommand{\b }{\beta }
\renewcommand{\d }{\delta }
\renewcommand{\l }{\lambda}
\renewcommand{\ln }{\lambda_n}
\newcommand{\n }{\nabla }
\newcommand{\G}{\Gamma}
\newcommand{\X}{\mathcal{X}}
\renewcommand{\H}{H^1(\RT)}
\newcommand{\Hr}{H^1_r(\RT)}
\newcommand{\HT}{H^1(\RT)}
\newcommand{\HTr}{H^1_r(\RT)}
\newcommand{\N}{\mathbb{N}}
\newcommand{\D }{{\mathcal D}^{1,2}(\RT)}
\newcommand{\irt }{\int_{\RT}}
\def\bbm[#1]{\mbox{\boldmath $#1$}}
\newcommand{\beq }{\begin{equation}}
\newcommand{\eeq }{\end{equation}}
\renewcommand{\le}{\leqslant}
\renewcommand{\ge}{\geqslant}
\newcommand{\dis}{\displaystyle}
\newcommand{\ird }
\newtheorem{theorem}{Theorem}[section]
\newtheorem{lemma}[theorem]{Lemma}
\newtheorem{definition}[theorem]{Definition}
\newtheorem{proposition}[theorem]{Proposition}
\newtheorem{remark}[theorem]{Remark}
\title[On the Schr\"odinger-Born-Infeld system]{On the Schr\"odinger-Born-Infeld system}
\author[A. Azzollini]{Antonio Azzollini}
\address{Dipartimento di Matematica, Informatica ed Economia, Universit\`a degli
Studi della Basilicata,
\newline\indent
Via dell'Ateneo Lucano 10, I-85100
Potenza, Italy}
\email{antonio.azzollini@unibas.it}
\author[A. Pomponio]{Alessio Pomponio}
\address{Dipartimento di Meccanica, Matematica e Management,
Politecnico di Bari
\newline\indent
Via Orabona 4,  70125  Bari, Italy}
\email{alessio.pomponio@poliba.it}
\author[G. Siciliano]{Gaetano Siciliano}
\address{Departamento de Matem\'atica, Instituto de Matem\'atica e Estat\'istica
Universidade de S\~ao Paulo
\newline\indent
Rua do Mat\~ao 1010,  05508-090, S\~ao Paulo, SP, Brazil }
\email{sicilian@ime.usp.br}
\thanks{A. Azzollini and A. Pomponio are partially supported by a grant of the group GNAMPA of INDAM. A. Pomponio is partially supported also by FRA2016 of Politecnico di Bari.
G. Siciliano is supported by Capes, CNPq and Fapesp, Brazil.}
\subjclass[2010]{35J50, 35J93, 35Q60}
\keywords{Schr\"odinger-Born-Infeld equation, nonlinear electromagnetic theory}
\begin{document}

\maketitle

\begin{abstract}
In this paper we study a system which we propose as a model to describe the interaction between matter and electromagnetic field from a dualistic point of view. This system arises from a suitable coupling of the Schr\"odinger and the Born-Infeld lagrangians, this latter replacing the role that, classically, is played by the Maxwell lagrangian.\\
We use a variational approach to find an electrostatic radial ground state solution by means of suitable estimates on the functional of the action.
\end{abstract}

\section{Introduction}

In the recent years, several models have been proposed to provide a mathematical description of the interaction between a charged particle and the electromagnetic field generated by itself. According to two different philosophycal concepts, the way to perform a mathematical formulation can follow two different and, in some way, antithetical approaches.

The theory developed by Born and Infeld (see \cite{BInat} and \cite{BI}) introduced the idea that both the matter and the electromagnetic field were expression of a unique physical entity. According to this unitarian point of view, the system giving a complete description of the dynamics arose variationally starting from a nonlinear version of the Maxwell lagrangian. This unitarian approach was also taken up by Benci and Fortunato in \cite{BF3} (see also \cite{ABDF} and \cite{DS}).\\
On the other hand there is the dualistic point of view, based on the idea that the dynamics can be described coupling equations related with particles and equations related with the electromagnetic field through a suitable combination of the lagrangians.
Starting from the results obtained by Benci and Fortunato \cite{BF1}, the literature is rich of papers studying models based on this latter point of view. \\ In the past, the duality matter-electromagnetic field was usually carried out by means of either Schr\"odinger or Klein-Gordon lagrangian to provide the mathematical description of the particle, and of the Maxwell lagrangian, or higher order approximations (in the sense of Taylor series) of the Born-Infeld lagrangian (see for example \cite{DP} and \cite{BK}) to represent the electromagnetic field.

Recently, Yu has proposed in \cite{yu} a dualistic model obtained coupling Klein-Gordon and Born-Infeld lagrangians and has studied the electrostatic case expressed by the following system
\begin{equation*}\label{eqq}\tag{$\mathcal{KGBI}$}
\begin{cases}
-\Delta u+(m^2-(\omega+\phi)^2)u= |u|^{p-1}u & \hbox{ in }\RT,
\\[3mm]
\dv \left(\dfrac{\n \phi}{\sqrt{1-|\n \phi|^2}}\right)=u^2(\omega + \phi) & \hbox{ in }\RT,
\\
u(x)\to 0, \ \phi(x)\to 0, & \hbox{ as }x\to \infty.
\end{cases}
\end{equation*}

As a consequence of the form of the differential operator in the second equation, a variational approach to the problem can not be performed in the usual functional spaces. In particular, the quantity $1/\sqrt{1-|\n\phi(x)|^2}$ makes sense when $x\in\RT$ is such that $|\n \phi(x)|<1$, being this inequality a necessary constraint to be considered in the functional setting.

Inspired by \cite{yu}, our aim is to propose and study a new model which represents a variant of the well-known Schr\"odinger-Maxwell system as it was introduced in \cite{DM}. Indeed we replace the usual Maxwell lagrangian with the Born-Infeld one
 and we look for the electrostatic solutions. The system in this case becomes
\begin{equation}\label{eq}\tag{$\mathcal{SBI}$}
\begin{cases}
-\Delta u+u+\phi u= |u|^{p-1}u & \hbox{ in }\RT,
\\[3mm]
-\dv \left(\dfrac{\n \phi}{\sqrt{1-|\n \phi|^2}}\right)=u^2 & \hbox{ in }\RT,
\\
u(x)\to 0, \ \phi(x)\to 0, & \hbox{ as }x\to \infty,
\end{cases}
\end{equation}
and we will refer to it as  Schr\"odinger-Born-Infeld system.

At least formally, the system \eqref{eq} comes variationally from the action functional  $F$ defined by
\begin{equation*}
F(u,\phi)=\frac 12 \irt \left(|\n u|^2+u^2\right)
+\frac 12 \irt \phi u^2
-\frac 1{p+1}\irt |u|^{p+1}
-\frac 12 \irt\left(1-\sqrt{1-|\n \phi|^2}\right).
\end{equation*}
Dealing with this functional presents evident difficulties for several reasons, starting with the definition of the functional setting. Indeed we observe that, being on the one hand natural to consider $u\in\HT$, on the other the presence of the term $ \irt\left(1-\sqrt{1-|\n \phi|^2}\right)$ forces us to restrict the setting of admissible functions $\phi$. \\
We define
\begin{equation}\label{eq:spaceX}
\X:=\D \cap \{\phi \in C^{0,1}(\mathbb R^{3}): \| \nabla\phi\|_{\infty} \leq 1\}
\end{equation}
where $\D$ is the completion of $C_c^\infty (\RT)$ with respect to the norm $\|\nabla\cdot\|_2.$ Hereafter we denote  by $\|\cdot\|_q$ the norm in $L^q(\RT)$, for $q\in [1,+\infty]$.

We are looking for {\em weak solutions} in the following sense.
\begin{definition}\label{def:ws}
A {\em weak solution} of \eqref{eq} is a couple $(u,\phi)\in \H \times \X$
such that for all $(v,\psi) \in C^{\infty}_{c}(\RT)\times C^{\infty}_{c}(\RT)$, we have
\begin{equation*}
\begin{cases}
\dis  \irt \n u \cdot \n v+u v
+\phi u v
=\irt |u|^{p-1}uv
\\[5mm]
\dis \irt \frac{\n \phi\cdot \n \psi}{\sqrt{1-|\nabla \phi|^2}}
=\irt u^2 \psi.
\end{cases}
\end{equation*}
\end{definition}
Observe that the boundary condition at infinity is encoded in the functional space.

Of course, the fact that the setting $\H\times\X$ is not a vector space is a nontrivial obstacle to our variational approach. In particular, to compute variations with respect to $\phi$ along the direction established by a generic smooth and compactly supported function, we need to require in advance that $\|\n \phi\|_\infty< 1$. This fact brings with it a concrete complication, for example in dealing with the reduction method which is a standard tool used in this kind of problems (see, for example, \cite{BF1,BF2,yu}). Indeed, the strongly indefinite nature of the functional can be classically removed showing that, for any $u\in \HT$ fixed, there exists a unique $\phi_u\in \X$ solution of the second equation of system \eqref{eq} and reducing the problem to that of finding critical points of the (no more strongly indefinite) one-variable functional $I(u)=F(u,\phi_u)$, defined on $\HT$ (see Section \ref{se:fs} for more details).
\\
As a consequence, we are led to consider a preliminary minimizing problem on the set $\X$ and then, because of the bad properties of $\X$ itself, we have to study the relation between solutions of this minimizing problem and solutions of the second equation (with respect to $\phi$, being $u$ fixed). This second step is one of the questions left open for \eqref{eqq} in \cite{yu}, which has been recently solved in \cite{BDP} in a radial setting. For this reason, and also in order to overcome difficulties related with compactness, we will restrict our study to radial solutions.
So, let us introduce our functional framework: we set
\[
\HTr=\{u\in \HT \mid u \hbox{ is radially symmetric}\}
\]
and
\[
\X_r=\{\phi\in \X \mid \phi \hbox{ is radially symmetric}\}.
\]

Our main results are the following


\begin{theorem}\label{main}
For any $p\in (5/2,5)$, the problem
\eqref{eq} possesses a radial ground state solution, namely a
solution  $(u,\phi)\in \HTr\times \X_r$ minimizing the functional $F$ among all the nontrivial radial
solutions. Moreover  both $u$ and $\phi$ are of class $C^2(\RT)$.
\end{theorem}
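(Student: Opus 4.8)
The plan is to carry out the by-now classical reduction in the radial setting, exploiting the fact that radial symmetry tames the gradient constraint built into $\X$. First I would, for $u\in\HTr$ fixed, solve the second equation for $\phi$. The natural object is the convex functional $\phi\mapsto \frac12\irt\bigl(1-\sqrt{1-|\n\phi|^2}\bigr)-\frac12\irt \phi u^2$ on the convex set $\X_r$; its minimizer $\phi_u$ is the candidate solution. The delicate point -- the one left open in \cite{yu} and settled in \cite{BDP} -- is that a constrained minimizer need not satisfy the Euler--Lagrange equation if the constraint $\|\n\phi\|_\infty\le1$ is active. Here the radial structure removes the difficulty: writing $w(r)=\phi'(r)/\sqrt{1-\phi'(r)^2}$, the equation integrates to $r^2w(r)=-\int_0^r s^2u(s)^2\,ds$, whence $\phi'(r)=w(r)/\sqrt{1+w(r)^2}$ and therefore $|\phi'(r)|<1$ \emph{automatically}, for every $r>0$. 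This yields an explicit $\phi_u\in\X_r$, nonnegative and decreasing, which genuinely solves the second equation; one checks $\phi_u\in\Dr$ using $\int_0^\infty s^2u^2\,ds<\infty$, and uniqueness follows from strict convexity.

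Next I would study the map $u\mapsto\phi_u$, proving it is continuous (indeed $C^1$) on $\HTr$ and recording the two estimates that drive everything. Testing the equation with $\phi_u$ gives $\irt \tfrac{|\n\phi_u|^2}{\sqrt{1-|\n\phi_u|^2}}=\irt u^2\phi_u$; combined with the pointwise bounds $|\xi|^2\le \tfrac{|\xi|^2}{\sqrt{1-|\xi|^2}}$ and $1-\sqrt{1-|\xi|^2}\le \tfrac{|\xi|^2}{\sqrt{1-|\xi|^2}}$ this shows $\|\n\phi_u\|_2^2\le\irt u^2\phi_u$ and $\irt(1-\sqrt{1-|\n\phi_u|^2})\le\irt u^2\phi_u$. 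I can then define the reduced functional $I(u)=F(u,\phi_u)$ on $\HTr$; since $\phi_u$ is a critical point of $F(u,\cdot)$, the derivative simplifies to $I'(u)[v]=\irt\n u\cdot\n v+uv+\phi_u uv-\irt|u|^{p-1}uv$, so that critical points of $I$ are exactly the weak solutions $(u,\phi_u)$ of \eqref{eq}. Moreover the coupling contribution $\frac12\irt\phi_u u^2-\frac12\irt(1-\sqrt{1-|\n\phi_u|^2})$ is nonnegative, which gives the functional the right shape.

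With $I$ in hand, the scheme is a mountain pass / Nehari argument. The geometry at the origin and the existence of a descent direction follow from $p+1>2$ together with the nonnegativity of the coupling term. The heart of the matter is the control of the Born--Infeld energy: unlike the Maxwell case, where $\irt\phi_u u^2$ is exactly quartic in $u$ and would force $p>3$, the saturation $|\n\phi_u|\le1$ makes $\phi_u$, and hence the coupling, grow \emph{subquartically} for large $u$. Quantifying this growth is where I expect the main obstacle to lie, and it is exactly here that the hypothesis $p\in(5/2,5)$ is used: the lower bound $5/2$ is the threshold at which the nonlinearity $\frac1{p+1}\irt|u|^{p+1}$ dominates the Born--Infeld coupling, so that minimizing sequences on the Nehari manifold (equivalently, Palais--Smale sequences at the mountain pass level) are bounded in $\HTr$, while the upper bound $p<5$ keeps the problem subcritical.

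Finally I would recover compactness from the radial setting: since $p+1\in(7/2,6)\subset(2,6)$, the embedding $\HTr\hookrightarrow L^{p+1}(\RT)$ is compact, so a bounded Palais--Smale sequence converges, after passing to a subsequence, to a nontrivial $u$; continuity of $u\mapsto\phi_u$ then passes to the limit and produces a weak solution $(u,\phi_u)$. Identifying the mountain pass level with the least energy among nontrivial radial solutions gives the ground state property. The $C^2$ regularity is obtained by bootstrap: elliptic regularity for the first equation improves $u$, the explicit radial formula (or Born--Infeld regularity theory) improves $\phi_u$ from the regularity of $u^2$, and iterating yields $u,\phi\in C^2(\RT)$.
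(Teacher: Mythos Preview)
Your overall architecture (reduction to $I(u)=F(u,\phi_u)$, mountain pass in $\HTr$, Strauss compactness, bootstrap regularity) matches the paper, but you have the role of the hypothesis $p>5/2$ inverted, and this hides the actual difficulty.

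The condition $p>5/2$ is \emph{not} what makes Palais--Smale sequences bounded; it is what makes the mountain pass geometry work at infinity. The coupling term $\tfrac12\irt\phi_u u^2-\tfrac12\irt(1-\sqrt{1-|\n\phi_u|^2})$ being nonnegative is exactly an \emph{obstruction} to finding a point where $I<0$: one has to show that $\irt\phi_u u^2$ grows slowly enough along a ray $t\mapsto tu$ to be beaten by $t^{p+1}$. The paper does this via the interpolation estimate (Lemma~\ref{le:q}) $\|\n\phi_u\|_2^{(q-1)/q}\le C\|u\|$ for $q\in[2,3)$, which yields $\irt\phi_{tu}(tu)^2\le C t^{(3q-2)/(q-1)}$; the exponent $(3q-2)/(q-1)$ can be pushed down to any value $>7/2$, so one needs $p+1>7/2$, i.e.\ $p>5/2$. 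Your sentence ``the existence of a descent direction follow from $p+1>2$ together with the nonnegativity of the coupling term'' is therefore not correct.

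Conversely, the boundedness of Palais--Smale sequences is where you are too optimistic. Because the Born--Infeld term has no homogeneity, the usual Ambrosetti--Rabinowitz/Nehari algebra does not directly give a bound, and the paper does not attempt one. Instead it runs Jeanjean's monotonicity trick: for a.e.\ $\l\in[1/2,1]$ the perturbed functional $I_\l$ has a \emph{bounded} Palais--Smale sequence at its mountain pass level, hence a nontrivial critical point $u_\l$. To pass to $\l\to1$ one must bound $(u_{\l_n})_n$, and this is done by combining the Nehari identity with a Pohozaev identity (Proposition~\ref{pr:poho}); the linear combination that makes all coefficients positive requires only $p>2$. Your proposal contains neither the monotonicity trick nor a Pohozaev identity, so the step ``minimizing sequences on the Nehari manifold \dots\ are bounded in $\HTr$'' is currently a gap.

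A minor point: the paper does not identify the mountain pass level with the radial ground state level. Once one nontrivial radial critical point exists, the ground state is obtained by a separate minimization over $\mathcal S_r=\{u\in\HTr\setminus\{0\}:I'(u)=0\}$, using the same Pohozaev-based bound and Strauss compactness.
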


What immediately stands out is the unusual range where $p$ varies. It follows from the fact that, in view of the application of the mountain pass theorem,  we need to find a point with a sufficiently large norm where the functional is negative. In order to do this, usually one computes the reduced one-variable functional $I$ on curves of the type
$$t\in (0,+\infty)\mapsto u_t:=t^\a u(t^\b \cdot)\in\HTr,$$
and look for suitable values of $\a $ and $\b $ for which $I(u_t)<0$ for large values of $t$.
However, in our case,  because of the lack of homogeneity and since a precise expression of $\phi_{u_t}$ is not available, we need to proceed by means of estimates of $\phi_{u }$ which lead, as a consequence, to lose something in terms of powers $p$. 

Summing up, our aim in this paper is to propose 
the new model problem \eqref{eq}
and give a positive answer
concerning the existence of solutions, at least for $p\in (5/2, 5)$.
We leave as an open problem the case of smaller $p$ and the existence of non-radial solutions.

The paper is organized as follows: in Section \ref{se:fs} we introduce the functional setting and present some preliminary results, while in Section \ref{se:pr } we prove Theorem \ref{main}.

We finish this section with some notations. In the following we denote by $\|\cdot\|$ the norm in $\HT$ and by $c,c_i,C,C_i$   arbitrary fixed positive constants which can vary from line to line.

\section{Functional setting and preliminary results}\label{se:fs}

We start recalling some properties of the ambient space $\X$ defined in \eqref{eq:spaceX}.

\begin{lemma}[Lemma 2.1 of \cite{BDP}]
\label{lemma21}
The following assertions hold:
\begin{enumerate}[label=(\roman*),ref=\roman*]
\item \label{it:w1p}$\X$ is continuously embedded in $W^{1,p}(\RT)$, for all $p\in[6,+\infty)$;
\item \label{it:embLinf}  $\X$ is continuously embedded in $L^\infty(\RT)$;
\item \label{it:C0} if $\phi\in \X$, then $\lim_{|x|\to \infty} \phi(x)=0$;
\item \label{it:wc} $\X$ is weakly closed;
\item \label{it:compact}  if $(\phi_n)_n\subset\X$ is bounded, there exists $\bar \phi\in \X$ such that, up to a subsequence, $\phi_{n}\rightharpoonup \bar \phi$ weakly in $\X$ and uniformly on compact sets.
\end{enumerate}
\end{lemma}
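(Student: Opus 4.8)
The plan is to establish the five assertions of Lemma~\ref{lemma21} by exploiting the two defining features of $\X$: membership in $\D=\mathcal D^{1,2}(\RT)$, which controls $\|\nabla\phi\|_2$, and the global Lipschitz bound $\|\nabla\phi\|_\infty\le 1$. The strategy throughout is to interpolate between the $L^2$ control of the gradient (from $\D$) and the $L^\infty$ control of the gradient (from the Lipschitz constraint), since together they force strong integrability on $\nabla\phi$ itself.

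For (\ref{it:w1p}), I would first note that $\nabla\phi\in L^2\cap L^\infty$ implies $\nabla\phi\in L^q$ for every $q\in[2,+\infty]$ by interpolation, in particular for all $q\in[6,+\infty)$. To control $\phi$ itself in $L^q$, I would invoke the Sobolev embedding $\D\hookrightarrow L^6(\RT)$, which gives $\phi\in L^6$; combined with $\nabla\phi\in L^q$ for $q\ge 6$ this yields $\phi\in W^{1,q}$ with a continuous estimate $\|\phi\|_{W^{1,q}}\lesssim\|\nabla\phi\|_2$. Assertion (\ref{it:embLinf}) then follows from (\ref{it:w1p}) by choosing any $q>3$ and applying the Morrey embedding $W^{1,q}(\RT)\hookrightarrow L^\infty(\RT)$ (indeed $W^{1,q}\hookrightarrow C^{0,1-3/q}$). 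Assertion (\ref{it:C0}) follows because $\phi\in W^{1,q}$ with $q>3$ is (after modification on a null set) continuous and the embedding into the space of functions vanishing at infinity holds for $W^{1,q}(\RT)$ with $q<\infty$; alternatively one approximates $\phi$ by $C^\infty_c$ functions in the $\D$-norm and uses uniform continuity from the Lipschitz bound to pass the decay to the limit.

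For the closedness and compactness statements I would argue as follows. For (\ref{it:wc}), take $\phi_n\in\X$ with $\phi_n\rightharpoonup\bar\phi$ weakly in $\D$; weak lower semicontinuity of the $L^\infty$-norm of the gradient under weak convergence (or a test-function argument showing $\|\nabla\bar\phi\|_\infty\le\liminf\|\nabla\phi_n\|_\infty\le 1$) gives $\bar\phi\in\X$, so $\X$ is weakly closed. For (\ref{it:compact}), a bounded sequence $(\phi_n)\subset\X$ is bounded in $\D$, hence has a subsequence converging weakly in $\D$ to some $\bar\phi$, which lies in $\X$ by (\ref{it:wc}); the uniform bound $\|\nabla\phi_n\|_\infty\le 1$ makes the family equi-Lipschitz, and since the $\phi_n$ are equibounded on compact sets (by the $L^\infty$ bound of (\ref{it:embLinf})), the Ascoli--Arzel\`a theorem yields uniform convergence on compact sets along a further subsequence, with limit necessarily $\bar\phi$.

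The main obstacle I expect is not any single embedding but rather the bookkeeping around the constraint $\|\nabla\phi\|_\infty\le 1$: unlike a linear space, $\X$ is only a closed convex subset of $\D$, so one must be careful that the constraint survives the limiting operations. The delicate point is (\ref{it:compact}), where one needs to reconcile the weak $\D$-limit with the uniform (Ascoli--Arzel\`a) limit and check they coincide; the equi-Lipschitz property is what saves the argument, upgrading weak convergence to local uniform convergence. Since this is quoted as Lemma~2.1 of \cite{BDP}, the cleanest route is to cite that reference, but the interpolation-plus-Sobolev scheme above reconstructs each item directly.
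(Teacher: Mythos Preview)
The paper gives no proof of this lemma: it is imported verbatim as Lemma~2.1 of \cite{BDP} and simply cited, so there is nothing to compare against beyond the reference itself, which you already acknowledge at the end of your proposal. Your reconstruction is essentially the standard one and is correct in spirit.

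One minor logical slip in your treatment of \eqref{it:w1p}: from $\phi\in L^{6}$ and $\nabla\phi\in L^{q}$ for $q\ge 6$ you cannot directly conclude $\phi\in L^{q}$, which is what you need for $\phi\in W^{1,q}$. The fix is either to use $\nabla\phi\in L^{r}$ with $r\in[2,3)$ and apply the Sobolev inequality $\|\phi\|_{r^{*}}\le C_{r}\|\nabla\phi\|_{r}$, which covers every exponent $r^{*}\in[6,\infty)$, or to prove \eqref{it:embLinf} first by the direct argument (if $|\phi(x_{0})|=M$ then the Lipschitz bound gives $|\phi|\ge M/2$ on $B_{M/2}(x_{0})$, hence $M^{9}\le C\|\phi\|_{6}^{6}\le C\|\nabla\phi\|_{2}^{6}$) and then interpolate $L^{6}\cap L^{\infty}$ to get $\phi\in L^{q}$ for all $q\ge 6$. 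With either adjustment your outline goes through without further change.
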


As already observed in the Introduction, the functional $F$ is strongly indefinite on $\HT\times \X$ from above and from below, and so we will consider a reduced one-variable functional, solving the second equation of \eqref{eq}, for any fixed $u\in \Hr$. Let us start considering the functional $E:H^{1}(\mathbb R^{3})\times \X \to \R$ defined as
\begin{equation*}
E(u,\phi) = \irt \left ( 1- \sqrt{1-|\nabla \phi|^{2}}\right) -\irt\phi u^{2}.
\end{equation*}

The following lemma holds.

\begin{lemma}\label{le:phiu}
For any $u\in \HT$ fixed, there exists a unique $\phi_u\in \X$ such that the following properties hold:
\begin{enumerate}[label=(\roman*),ref=\roman*]
\item \label{it:min} $\phi_u$ is the unique minimizer of the functional $E(u,\cdot):\X\to \R$ and $E(u,\phi_u)\le 0$, namely
\begin{equation}\label{eq:eneg}
\irt \phi_u u^2
\ge \irt \left(1-\sqrt{1-|\n \phi_u|^2}\right);
\end{equation}
\item \label{it:pos}$\phi_u\ge 0$ and $\phi_u=0$ if and only if $u=0$;
\item \label{it:6} if $\phi$ is a weak solution of the second equation of system \eqref{eq}, then $\phi=\phi_u$ and it satisfies the following equality
\begin{equation}\label{eq:phineh}
\irt \dfrac{|\n \phi_u|^2}{\sqrt{1-|\n \phi_u|^2}}=\irt \phi_u u^2.
\end{equation}
\end{enumerate}
Moreover, if $u\in\HTr$, then $\phi_u\in\X_r$ is the unique weak solution of the second equation of system \eqref{eq}.
\end{lemma}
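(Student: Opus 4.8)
The plan is to realize $\phi_u$ as the unique minimizer of the convex functional $E(u,\cdot)$ over $\X$ by the direct method, and then to read off the properties \eqref{it:min}--\eqref{it:6} from convexity together with the structure of the integrand. First I would establish that $E(u,\cdot)$ is coercive and bounded below on $\X$. The elementary pointwise inequality $1-\sqrt{1-t^2}\ge t^2/2$ for $t\in[0,1]$ gives $\irt\left(1-\sqrt{1-|\n\phi|^2}\right)\ge \tfrac12\|\n\phi\|_2^2$, while the continuous embedding $\X\hookrightarrow L^6(\RT)$ (contained in Lemma \ref{lemma21}) and $u^2\in L^{6/5}(\RT)$ yield, via H\"older, $\left|\irt\phi u^2\right|\le C\|\n\phi\|_2\,\|u\|_{12/5}^2$. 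Hence $E(u,\phi)\ge \tfrac12\|\n\phi\|_2^2 - C\|\n\phi\|_2\,\|u\|_{12/5}^2$, which is bounded below and coercive in the $\D$-norm. Taking a minimizing sequence $(\phi_n)$, its gradients stay bounded in $L^2$, so by Lemma \ref{lemma21}\,\eqref{it:compact} there is $\phi_u\in\X$ with $\phi_n\weakto\phi_u$ in $\X$ (in particular $\n\phi_n\weakto\n\phi_u$ in $L^2$) and uniformly on compact sets. Since $\xi\mapsto 1-\sqrt{1-|\xi|^2}$ is convex, the first term of $E$ is sequentially weakly lower semicontinuous, while the second term passes to the limit by $L^6$--$L^{6/5}$ duality. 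Thus $E(u,\phi_u)\le\liminf_n E(u,\phi_n)$ and $\phi_u$ is a minimizer.

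For uniqueness I would use that $\xi\mapsto 1-\sqrt{1-|\xi|^2}$ is in fact strictly convex on $\overline{B_1}$: its Hessian is positive definite in the interior, and on the unit sphere strict convexity follows from strict convexity of the sphere. As the remaining term is linear, two minimizers $\phi_1,\phi_2$ must satisfy $\n\phi_1=\n\phi_2$ a.e.; being elements of $\D$ they can differ only by a constant, which must vanish, so $\phi_1=\phi_2=:\phi_u$. Comparing with the admissible competitor $\phi=0$ gives $E(u,\phi_u)\le E(u,0)=0$, which is exactly \eqref{eq:eneg}, proving \eqref{it:min}. For \eqref{it:pos}, since $|\n|\phi_u||=|\n\phi_u|$ a.e. and $-\irt|\phi_u|u^2\le-\irt\phi_u u^2$, the function $|\phi_u|\in\X$ is also a minimizer, so by uniqueness $\phi_u=|\phi_u|\ge0$. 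If $u=0$ then $E(0,\cdot)\ge0$ with minimum at $0$, forcing $\phi_u=0$; conversely, if $u\ne0$ one picks $\psi\in C_c^\infty(\RT)$, $\psi\ge0$, with $\irt\psi u^2>0$ and checks, using $1-\sqrt{1-|\xi|^2}\sim|\xi|^2/2$, that $E(u,t\psi)<0$ for small $t>0$; hence $\inf E(u,\cdot)<0=E(u,0)$ and $\phi_u\ne0$. The same uniqueness, applied to the rotation invariance of $E(u,\cdot)$ when $u$ is radial, shows that $\phi_u\in\X_r$.

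Finally, for \eqref{it:6} I would exploit convexity once more: if $\phi$ is a weak solution of the second equation then, by the subgradient inequality for the convex integrand,
\[
\irt\left(1-\sqrt{1-|\n\eta|^2}\right)\ge \irt\left(1-\sqrt{1-|\n\phi|^2}\right) + \irt \frac{\n\phi\cdot\n(\eta-\phi)}{\sqrt{1-|\n\phi|^2}}
\]
for every competitor $\eta\in\X$; testing the weak equation against $\eta-\phi$ converts the last integral into $\irt u^2(\eta-\phi)$ and yields $E(u,\eta)\ge E(u,\phi)$, so $\phi$ is the minimizer and $\phi=\phi_u$. The equality \eqref{eq:phineh} then comes from testing the equation with $\phi_u$ itself.

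I expect the main obstacle to lie precisely here and in the closing ``Moreover'' statement. One must justify the use of the non-compactly-supported test functions $\eta-\phi$ and $\phi_u$, which is delicate because the flux $\n\phi_u/\sqrt{1-|\n\phi_u|^2}$ need not belong to $L^2(\RT)$ when $|\n\phi_u|$ approaches $1$; and, more seriously, one must show that the \emph{constrained} minimizer actually solves the Euler--Lagrange equation, i.e.\ that the gradient constraint $\|\n\phi\|_\infty\le1$ does not obstruct the variation. This is exactly the point settled in the radial framework in \cite{BDP}, on which I would rely to pass from the minimization property to the weak equation and to close the radial case through an approximation/cut-off argument.
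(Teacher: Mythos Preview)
Your proposal is correct and, in fact, considerably more informative than the paper's own proof, which consists of a single sentence deferring everything to \cite{BDP}: points \eqref{it:min}, \eqref{it:pos}, \eqref{it:6} are declared an immediate consequence of \cite[Theorems~1.3 and Lemma~2.12]{BDP}, and the ``Moreover'' clause is attributed to \cite[Theorem~1.4]{BDP}. What you have written is essentially an outline of the arguments in that reference: direct method plus strict convexity for existence and uniqueness of the minimizer, comparison with the zero competitor for \eqref{eq:eneg}, the $|\phi_u|$ trick for nonnegativity, and the subgradient inequality to identify any weak solution with the minimizer. You also correctly isolate the one genuinely hard step---passing from the constrained minimizer to the Euler--Lagrange equation (and legitimising non-$C_c^\infty$ test functions such as $\eta-\phi$ or $\phi_u$ itself)---and rightly note that this is precisely the content of \cite{BDP} in the radial setting. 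So your route is not really different from the paper's; it is the paper's cited source made explicit, with the same ultimate reliance on \cite{BDP} for the delicate point.
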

\begin{proof}
Points \eqref{it:min}, \eqref{it:pos} and \eqref{it:6} are an immediate consequence of Theorems 1.3 and Lemma 2.12 of \cite{BDP}. For the second part of the statement we refer to \cite[Theorem 1.4]{BDP}.
\end{proof}


\begin{remark}\label{rem:continua}
We point out that, as stated in \cite[Remark 5.5]{BDP}, if $w_{n}\to w $ in $L^{p}(\mathbb R^{3})$,
with $p\in[1,+\infty)$ then $\phi_{w_{n}}\to \phi_{w}$ in $L^{\infty}(\mathbb R^{3})$.

\end{remark}

By Lemma \ref{le:phiu}, we can deal with the following one-variable functional defined on $\HT$ as
\begin{align*}
I(u)&=F(u,\phi_u)
\\
&=\frac 12 \irt \left(|\n u|^2+u^2\right)
+\frac 12 \irt \phi_u u^2
-\frac 1{p+1}\irt |u|^{p+1}
-\frac 12 \irt \left(1-\sqrt{1-|\n \phi_u|^2}\right)\\
&= \frac 12 \irt \left(|\n u|^2+u^2\right) -\frac 1{p+1}\irt |u|^{p+1} - \frac{1}{2}E(u,\phi_{u}).
\end{align*}

\begin{proposition}\label{prop:IC1}
The functional $I$ is of class $C^{1}$ and for every $u, v\in H^{1}(\mathbb R^{3})$,
$$I'(u)[v] = \int_{\mathbb R^{3}} \nabla u \cdot \nabla v +\int_{\mathbb R^{3}} uv +\int_{\mathbb R^{3}} \phi_{u} uv
-\int_{\mathbb R^{3}} |u|^{p-1}uv.$$
\end{proposition}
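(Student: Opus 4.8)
The plan is to decompose
\[
I(u)=\frac 12 \irt \left(|\n u|^2+u^2\right)-\frac 1{p+1}\irt |u|^{p+1}-\frac 12\, G(u),
\qquad G(u):=E(u,\phi_u),
\]
and treat the three pieces separately. The quadratic part is manifestly $C^1$ on $\HT$, and the term $\frac 1{p+1}\irt|u|^{p+1}$ is $C^1$ by the Sobolev embedding $\HT\hookrightarrow L^{p+1}(\RT)$, which is legitimate since $p+1\le 6$. Thus everything reduces to proving that $G$ is $C^1$ with $G'(u)[v]=-2\irt \phi_u u v$; the announced formula then follows at once, the factor $-\frac12$ multiplying $-2$ to produce the term $+\irt \phi_u u v$.

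The main obstacle is that $u\mapsto\phi_u$ takes values in $\X$, which is not a linear space, so I cannot differentiate the composition $E(u,\phi_u)$ by a naive chain rule, nor do I want to establish differentiability of $u\mapsto\phi_u$. Instead I would exploit the minimality of $\phi_u$ for $E(u,\cdot)$ (Lemma \ref{le:phiu}\eqref{it:min}) through a sandwich argument. Fix $u,v\in\HT$ and $t>0$, and write $G(u+tv)-G(u)=E(u+tv,\phi_{u+tv})-E(u,\phi_u)$. Using the two minimizing inequalities $E(u+tv,\phi_{u+tv})\le E(u+tv,\phi_u)$ and $E(u,\phi_u)\le E(u,\phi_{u+tv})$, and recalling that the $\sqrt{\cdot}$ part of $E$ is independent of $u$ while the $u$-dependence enters only through the quadratic term $-\irt\phi u^2$, one traps the increment between $-\irt\phi_{u+tv}(2tuv+t^2v^2)$ and $-\irt\phi_u(2tuv+t^2v^2)$. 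Dividing by $t$, letting $t\to 0^+$, and invoking $\phi_{u+tv}\to\phi_u$ in $L^\infty(\RT)$ (Remark \ref{rem:continua}, since $u+tv\to u$ in every $L^q$, $q\in[2,6]$) together with $uv\in L^1(\RT)$, both bounds converge to $-2\irt\phi_u u v$. The same computation for $t<0$ yields the matching one-sided limit, so the Gateaux derivative exists and equals $-2\irt \phi_u u v$.

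Finally I would upgrade Gateaux differentiability to $C^1$ by checking that $u\mapsto G'(u)$ is continuous from $\HT$ into its dual. If $u_n\to u$ in $\HT$, then $\phi_{u_n}\to\phi_u$ in $L^\infty(\RT)$ with $\|\phi_{u_n}\|_\infty$ bounded (Remark \ref{rem:continua}) and $u_n\to u$ in $L^2(\RT)$, whence, using H\"older with $\|v\|_2\le\|v\|\le 1$,
\[
\sup_{\|v\|\le 1}\bigl|G'(u_n)[v]-G'(u)[v]\bigr|\le 2\,\|\phi_{u_n}u_n-\phi_u u\|_2\le 2\bigl(\|\phi_{u_n}\|_\infty\|u_n-u\|_2+\|\phi_{u_n}-\phi_u\|_\infty\|u\|_2\bigr)\to 0.
\]
Hence $G\in C^1(\HT)$, and therefore $I\in C^1(\HT)$ with the asserted derivative. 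The only genuinely delicate point is the sandwich step: it is exactly where the minimality of $\phi_u$ and the continuity statement of Remark \ref{rem:continua} combine to bypass the missing differentiability of $u\mapsto\phi_u$.
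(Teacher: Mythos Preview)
Your argument is correct and follows essentially the same route as the paper: both exploit the minimality of $\phi_u$ for $E(u,\cdot)$ to sandwich the increment, together with the $L^\infty$ continuity of $u\mapsto\phi_u$ from Remark \ref{rem:continua}, and then check continuity of the resulting derivative map. The only cosmetic difference is that the paper establishes Fr\'echet differentiability directly (showing the remainder is $o(v)$ as $v\to 0$), whereas you compute the Gateaux derivative along rays $t\mapsto u+tv$ and then upgrade to $C^1$ via continuity of the Gateaux map---a standard and equally valid organization.
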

\begin{proof}
Arguing as in \cite{yu}, let us show that
$$I(u+v) - I(u)-DI(u)[v]=o(v),\quad \text{as } v\to 0,$$
where
$$DI(u)[v]:=\int_{\mathbb R^{3}} \nabla u \cdot \nabla v +\int_{\mathbb R^{3}} uv +\int_{\mathbb R^{3}} \phi_{u} uv
-\int_{\mathbb R^{3}} |u|^{p-1}uv
$$
which is trivially linear and continuous in $v$.
\\
We set
$$I(u+v) - I(u)-DI(u)[v] = A_1 + A_2 + A_3$$
where
\begin{align*}
A_1 &:= \frac{1}{2}\int_{\mathbb R^{3}} |\nabla (u+v)|^{2} - \frac{1}{2}\int_{\mathbb R^{3}} |\nabla u|^{2} - \int_{\mathbb R^{3}} \nabla u \cdot \nabla v, \\
A_2&:= -\frac{1}{p+1} \int_{\mathbb R^{3}} |u+v|^{p+1} +\frac{1}{p+1}\int_{\mathbb R^{3}} |u|^{p+1} +\int_{\mathbb R^{3}} |u|^{p-1}uv, \\
A_3 &:= \frac{1}{2}\int_{\mathbb R^{3}} v^{2} -\frac{1}{2}E(u+v,\phi_{u+v})+\frac{1}{2}E(u,\phi_{u}) - \int_{\mathbb R^{3}} \phi_{u}uv.
\end{align*}
Clearly
$$A_1=o(v),\quad A_2=o(v).$$
Now observe that by point \eqref{it:min} of Lemma \ref{le:phiu} we have $E(u+v,\phi_{u})\ge E(u+v,\phi_{u+v})$, so that
an explicit computation gives
\begin{align*}
A_3&\ge\frac{1}{2}\int_{\mathbb R^{3}} v^{2}-\frac{1}{2}E(u+v,\phi_{u})+\frac{1}{2}E(u,\phi_{u})-\int_{\mathbb R^{3}} \phi_{u}uv \nonumber \\
&=\frac{1}{2}\int_{\mathbb R^{3}} v^{2} +\frac{1}{2}\int  \phi_{u} v^{2}\nonumber
\ge \frac{1}{2}\int_{\mathbb R^{3}} \phi_{u}  v^{2} \nonumber
=o(v)
\end{align*}
being $\displaystyle\Big|\int_{\mathbb R^{3}} v^{2} \phi_{u}\Big| \le C \|v\|^{2} \|\phi_{u}\|_{\infty}.$
Analogously, once again by point \eqref{it:min} of Lemma \ref{le:phiu}, being  $E(u,\phi_{u})\le E(u,\phi_{u+v})$, we get
\begin{align*}
A_3&\le \frac{1}{2}\int_{\mathbb R^{3}} v^{2} -\frac{1}{2}E(u+v,\phi_{u+v})+\frac{1}{2}E(u,\phi_{u+v}) - \int_{\mathbb R^{3}} \phi_{u}u v\\
&=\frac{1}{2}\int_{\mathbb R^{3}}v^{2} + \int_{\mathbb R^{3}} \phi_{u+v}uv +\frac{1}{2} \int_{\mathbb R^{3}} \phi_{u+v}v^{2} - \int_{\mathbb R^{3}}\phi_{u} uv\\
&= o(v) +\frac12 \int \phi_{u+v}v^{2} +\int_{\mathbb R^{3}} (\phi_{u+v} -\phi_{u})uv\\
&\le o(v) + C \|v\|^{2}\|\phi_{u+v}\|_{\infty} +C\|u\| \|v\|\|\phi_{u+v} - \phi_{u}\|_{\infty}
= o(v),
\end{align*}
in view of Remark \ref{rem:continua}.
Hence $A_3=o(v)$ and the differentiability of $I$ is proved.
\\
Finally, let us prove the continuity of the map
$$u\in\HT\mapsto \phi_{u } u\in \mathcal L (H^{1}(\mathbb R^{3}); \mathbb R),$$
from which we easily deduce the continuity of $DI:\HT\to  \mathcal L (H^{1}(\mathbb R^{3}); \mathbb R).$
\\
Let $u_{n}\to u$ in $H^{1}(\mathbb R^{3})$.
Observe that uniformly in $v\in H^{1}(\mathbb R^{3})$, with $\|v\|\le 1$,
\begin{equation*}
 \int_{\mathbb R^{3}} \Big|\phi_{u_{n}} u_{n} -\phi_{u}u  \Big| |v| \le
   \int_{\mathbb R^{3}} |\phi_{u_{n}} |   | u_{n}-u | |v|  + \int_{\mathbb R^{3}} |\phi_{u_{n}} - \phi_{u}||u||v|=o_{n}(1),
\end{equation*}
again by Remark \ref{rem:continua}. The conclusion follows.
\end{proof}


%

\begin{proposition}\label{prop:solution}
If $(u,\phi)\in \HT \times \X$ is a weak nontrivial solution of \eqref{eq}, then $\phi=\phi_u$ and $u$ is a critical point of $I$.
On the other hand, if $u\in\HTr\setminus\{0\}$ is a critical point of $I$, then $(u,\phi_u)$ is a weak nontrivial solution of \eqref{eq}.
\end{proposition}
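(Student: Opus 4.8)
The statement splits into two implications, and the plan is to reduce each to the machinery already assembled in Lemma \ref{le:phiu} and Proposition \ref{prop:IC1}. For the first implication, suppose $(u,\phi)\in\HT\times\X$ is a weak nontrivial solution of \eqref{eq}. By definition, $\phi$ satisfies the weak form of the second equation, so part \eqref{it:6} of Lemma \ref{le:phiu} applies and forces $\phi=\phi_u$. Substituting this identity into the weak form of the first equation, I would read off that for every test function $v\in C_c^\infty(\RT)$,
\[
\irt \n u\cdot\n v+uv+\phi_u uv-\irt|u|^{p-1}uv=0,
\]
which is exactly $I'(u)[v]=0$ by Proposition \ref{prop:IC1}. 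A standard density argument extends this to all $v\in\HT$, so $u$ is a critical point of $I$.

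For the converse, let $u\in\HTr\setminus\{0\}$ be a critical point of $I$. The plan is to recover both equations of \eqref{eq} for the pair $(u,\phi_u)$. The second equation is essentially free: since $u$ is radial, the last sentence of Lemma \ref{le:phiu} guarantees that $\phi_u\in\X_r$ is the unique weak solution of the second equation of \eqref{eq}. For the first equation, the critical point condition $I'(u)[v]=0$ together with the expression for $I'$ in Proposition \ref{prop:IC1} gives the weak form of the first equation directly, for all $v\in\HT$ and in particular for all $v\in C_c^\infty(\RT)$. Thus $(u,\phi_u)$ satisfies Definition \ref{def:ws}, and it is nontrivial because $u\neq 0$.

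The one point that requires a little care, and which I expect to be the main (if modest) obstacle, is the radiality issue in the converse direction. Proposition \ref{prop:IC1} is stated for $u,v\in\HT$, so $I$ is differentiable on the full space, but the critical point $u$ is only assumed to be critical within the radial class $\HTr$. To conclude that $I'(u)[v]=0$ for \emph{all} $v\in\HT$ — which is what Definition \ref{def:ws} ultimately needs — I would invoke the symmetric criticality principle of Palais: since the functional $I$ is invariant under the action of $O(3)$ and $\HTr$ is the fixed-point subspace of this action, a critical point of $I|_{\HTr}$ is automatically a critical point of $I$ on all of $\HT$. Verifying the $O(3)$-invariance of $I$ is routine once one checks that $\phi_{u(g\,\cdot)}=\phi_u(g\,\cdot)$ for $g\in O(3)$, which follows from the uniqueness in Lemma \ref{le:phiu}\eqref{it:min} and the rotation-invariance of the functional $E$. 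With this in hand, both weak equations hold and the proof is complete.
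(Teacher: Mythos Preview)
Your proposal is correct and follows essentially the same route as the paper: the paper's proof simply cites \cite[Proposition 2.6]{BDP} (which underlies Lemma~\ref{le:phiu}\eqref{it:6}) together with Proposition~\ref{prop:IC1} for the first implication, and Lemma~\ref{le:phiu} together with Proposition~\ref{prop:IC1} for the second --- exactly the ingredients you identify.

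One small point: your discussion of Palais' symmetric criticality principle is unnecessary here. The hypothesis in the converse direction is that $u$ is a critical point of $I$, and since $I$ is defined (and $C^1$ by Proposition~\ref{prop:IC1}) on all of $\HT$, this already means $I'(u)[v]=0$ for every $v\in\HT$, not merely for radial $v$. The passage from ``critical point of $I|_{\HTr}$'' to ``critical point of $I$'' is the content of the \emph{next} proposition in the paper (Proposition~\ref{pr:criticalita}), which carries out precisely the $O(3)$-invariance and symmetric criticality argument you sketch. So your extra paragraph is not wrong, but it belongs to a separate statement that the paper deliberately isolates.
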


\begin{proof}
The first part of the statement is a consequence of \cite[Proposition 2.6]{BDP} and Proposition \ref{prop:IC1}, while the second part follows by Lemma \ref{le:phiu} and Proposition \ref{prop:IC1}.
\end{proof}

In the next proposition we are going to prove that $\HTr$ is a natural constraint for the functional $I$.

\begin{proposition}\label{pr:criticalita}
If $u\in \HTr$ is a critical point of $I_{|\HTr}$, then $u$ is a critical point of $I$.
\end{proposition}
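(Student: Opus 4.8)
The statement is an instance of the principle of symmetric criticality, and the plan is to prove it by an averaging argument based on the rotation action of $O(3)$ on $\HT$. For $g\in O(3)$ write $(T_g u)(x):=u(g^{-1}x)$. Since rotations are orthogonal, $T_g$ preserves both $\irt |\n u|^2$ and $\irt u^2$, so each $T_g$ is a linear isometry of $\HT$; moreover the action is strongly continuous in $g$ and its fixed-point set is exactly $\HTr$. I will work from the explicit expression for $I'$ furnished by Proposition \ref{prop:IC1}.

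The crucial point is the rotational equivariance of the solution map $u\mapsto \phi_u$, namely $\phi_{T_g u}=T_g\phi_u$ for every $g\in O(3)$. To see this, note that a change of variables together with the identity $|\n(T_g\phi)(x)|=|(\n\phi)(g^{-1}x)|$ (orthogonality of $g$) gives $E(T_g u,T_g\phi)=E(u,\phi)$ for every admissible $\phi$. Hence, if $\phi_u$ minimizes $E(u,\cdot)$ over $\X$, then $T_g\phi_u$ minimizes $E(T_g u,\cdot)$; by the uniqueness of the minimizer stated in point \eqref{it:min} of Lemma \ref{le:phiu} we conclude $\phi_{T_g u}=T_g\phi_u$. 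Feeding this into the formula of Proposition \ref{prop:IC1} and changing variables in each integral shows that
$$I'(T_g u)[T_g v]=I'(u)[v]\qquad\text{for all }u,v\in\HT,\ g\in O(3),$$
which is the infinitesimal form of the $O(3)$-invariance of $I$.

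Now let $u\in\HTr$ be a critical point of $I_{|\HTr}$ and fix an arbitrary $v\in\HT$. Since $T_g u=u$, the equivariance above yields $I'(u)[v]=I'(u)[T_g v]$ for every $g$. Averaging over $O(3)$ with respect to the normalized Haar measure $d\mu$ and using that $I'(u)$ is a bounded linear functional (so it commutes with the Bochner integral), I obtain $I'(u)[v]=I'(u)[\bar v]$, where $\bar v:=\int_{O(3)}T_g v\,d\mu(g)$. By invariance of the Haar measure one has $T_h\bar v=\bar v$ for all $h\in O(3)$, so $\bar v\in\HTr$; since $u$ is a critical point of $I_{|\HTr}$ this forces $I'(u)[\bar v]=0$, and therefore $I'(u)[v]=0$. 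As $v$ was arbitrary, $u$ is a critical point of $I$. The only genuinely nontrivial ingredient is the equivariance $\phi_{T_g u}=T_g\phi_u$, which rests entirely on the uniqueness in Lemma \ref{le:phiu}; everything else is the standard symmetric-criticality bookkeeping.
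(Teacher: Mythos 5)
Your proof is correct, and its one genuinely nontrivial ingredient is exactly the paper's: the equivariance $\phi_{T_g u}=T_g\phi_u$, obtained from the invariance of $E$ under simultaneous rotation of both arguments plus the uniqueness of the minimizer in point (i) of Lemma \ref{le:phiu} (the paper writes this as $E(u,T_{g^{-1}}\phi_{T_gu})=E(T_gu,\phi_{T_gu})\le E(T_g u,T_{g}\phi_{u})=E(u,\phi_{u})$, which is the same computation). Where you diverge is in how the symmetric-criticality step is handled: the paper deduces from the equivariance that $I$ itself is $O(3)$-invariant and then invokes Palais' Principle of Symmetric Criticality as a black box, whereas you work at the level of the derivative, using the explicit formula of Proposition \ref{prop:IC1} to get $I'(T_g u)[T_g v]=I'(u)[v]$, and then close the argument by Haar averaging: $\bar v:=\int_{O(3)}T_g v\,d\mu(g)$ is radial, $I'(u)[v]=I'(u)[\bar v]=0$. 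This averaging argument is precisely the standard proof of Palais' principle in the special case of a compact group acting by linear isometries, so in effect you re-derive the principle rather than cite it. The trade-off: the paper's route is shorter given the reference and does not need the explicit formula for $I'$ (invariance of the functional values suffices for Palais); yours is self-contained and makes visible exactly which structural facts make the principle applicable here — compactness of $O(3)$, linearity and isometry of the action (so that $\bar v$ exists as a Bochner integral and $T_h\bar v=\bar v$), and boundedness of the functional $I'(u)$ so that it passes inside the integral. Both are complete proofs; yours trades a citation for a page of standard bookkeeping, all of which you carry out correctly.
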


\begin{proof}
Denote by $O(3)$ the group of rotations in $\RT$ and 
for any $g\in O(3)$ consider the action
induced on $H^{1}(\RT)$, that is
	 $$T_g :u\in H^{1}(\RT)\mapsto  u\circ g\in H^{1}(\RT).$$ 
Clearly $\HTr$ is the set of the fixed points for  the group $T=\{T_{g}\}_{g\in O(3)}$
 namely 
\[
\HTr = \{ u\in \HT \mid T_g u = u \  \hbox{ for all } g \in O ( 3 ) \}. 
\]
Then the conclusion can be achieved by the Palais' Principle of Symmetric Criticality, 
if we show that $I$ is invariant under the action of $T$, that is
\begin{equation*}
I(T_g u)=I(u), \quad \hbox{for all } g \in O ( 3 ), u\in \HT. 
\end{equation*}
Actually it is sufficient to show that
$\phi_{T_gu}=T_g\phi_u$ for any $u\in \HT$ and for all $g \in O ( 3 )$.
To this aim, by Lemma \ref{le:phiu}, we have 
\[
E(u,T_{g^{-1}}\phi_{T_gu})
=E(T_gu,\phi_{T_gu})
\le E(T_g u,T_{g}\phi_{u})
=E(u,\phi_{u})
\]
and so, by the uniqueness of the minimizer of $E(u,\cdot)$, we conclude that $\phi_u=T_{g^{-1}}\phi_{T_gu}$ as desired.
\end{proof}

The following technical lemma will be useful to study the geometry of the functional $I$.
\begin{lemma}\label{le:q}
Let $q$ be in $[2,3).$ Then there exist positive constants $C$ and $C'$ such that, for any $u\in \H$, we have
\[
\|\n \phi_u\|_2^{\frac{q-1}{q}}\le C\|u\|_{2(q^*)'}\le C'\|u\|,
\]
where $q^*$ is the critical Sobolev exponent related to $q$ and $(q^*)'$ is its conjugate exponent,  namely
\[
q^*=\frac{3q}{3-q} \ \hbox{ and }\
 (q^*)'=\frac{3q}{4q-3}.
\]
\end{lemma}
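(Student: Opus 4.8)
The plan is to extract from the weak formulation of the second equation of \eqref{eq} a tractable energy identity and then to estimate its right-hand side by combining H\"older's inequality with the Sobolev embedding and, crucially, the pointwise constraint $\|\n\phi_u\|_\infty\le1$ built into $\X$. The natural starting point is the identity \eqref{eq:phineh} furnished by Lemma \ref{le:phiu}, namely $\irt |\n\phi_u|^2/\sqrt{1-|\n\phi_u|^2}=\irt\phi_u u^2$. Since $0<\sqrt{1-|\n\phi_u|^2}\le1$ pointwise, the integrand on the left dominates $|\n\phi_u|^2$, so I immediately obtain
\[
\|\n\phi_u\|_2^2\le\irt\phi_u u^2.
\]

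Next I would bound the right-hand side. By H\"older's inequality with the pair $(q^*,(q^*)')$,
\[
\irt\phi_u u^2\le\|\phi_u\|_{q^*}\,\|u\|_{2(q^*)'}^2,
\]
where I have used $\|u^2\|_{(q^*)'}=\|u\|_{2(q^*)'}^2$. To control $\|\phi_u\|_{q^*}$ I invoke the Sobolev inequality $\|\phi_u\|_{q^*}\le C\|\n\phi_u\|_q$, valid because $q^*$ is the Sobolev conjugate of $q$ and $\phi_u$ vanishes at infinity by Lemma \ref{lemma21}\eqref{it:C0}. The decisive step is then to pass from the $L^q$ norm back to the $L^2$ norm of $\n\phi_u$ using the membership $\phi_u\in\X$: since $q\ge2$ and $\|\n\phi_u\|_\infty\le1$, interpolating gives
\[
\|\n\phi_u\|_q^q=\irt|\n\phi_u|^{q-2}|\n\phi_u|^2\le\|\n\phi_u\|_\infty^{q-2}\|\n\phi_u\|_2^2\le\|\n\phi_u\|_2^2,
\]
that is $\|\n\phi_u\|_q\le\|\n\phi_u\|_2^{2/q}$. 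Chaining the three estimates yields $\|\n\phi_u\|_2^2\le C\|\n\phi_u\|_2^{2/q}\|u\|_{2(q^*)'}^2$; dividing by $\|\n\phi_u\|_2^{2/q}$ (the case $\n\phi_u=0$ being trivial) and noting $2-2/q=2(q-1)/q$ produces exactly the first asserted inequality after taking square roots.

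For the second inequality I only need the standard Sobolev embedding $\HT\hookrightarrow L^{2(q^*)'}(\RT)$. This requires $2(q^*)'=6q/(4q-3)$ to lie in the admissible range $[2,6]$, which is the routine consistency check: as $q$ ranges over $[2,3)$ the exponent $6q/(4q-3)$ decreases from $12/5$ toward $2$, hence always lies in $[2,6]$, and the embedding applies with some constant $C'$. I expect the main point requiring care to be the interpolation step, since it is precisely here that the otherwise troublesome constraint $\|\n\phi_u\|_\infty\le1$ is turned to our advantage, upgrading the $L^2$ information on $\n\phi_u$ to the $L^q$ control demanded by the Sobolev inequality; the remaining manipulations are elementary applications of H\"older's and Sobolev's inequalities.
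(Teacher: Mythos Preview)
Your chain of inequalities is essentially the same as the paper's, with one exception that matters: you start from the identity \eqref{eq:phineh}. According to Lemma \ref{le:phiu}\eqref{it:6}, that identity is available only \emph{if} $\phi_u$ is a weak solution of the second equation of \eqref{eq}; and, as the ``Moreover'' clause of Lemma \ref{le:phiu} makes explicit, this is established in the paper only for radial $u$. Since Lemma \ref{le:q} is stated for arbitrary $u\in\HT$, you are invoking \eqref{eq:phineh} in a regime where it has not been proved.

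The paper sidesteps this by using \eqref{eq:eneg} instead, which holds for every $u\in\HT$ because it follows from the minimizing property of $\phi_u$ (Lemma \ref{le:phiu}\eqref{it:min}). Combined with the elementary inequality $t/2\le 1-\sqrt{1-t}$ for $t\in[0,1]$ (applied with $t=|\n\phi_u|^2$), one gets
\[
\|\n\phi_u\|_2^2\le 2\irt\left(1-\sqrt{1-|\n\phi_u|^2}\right)\le 2\irt\phi_u u^2,
\]
which is exactly the starting inequality you need. From there your argument --- H\"older with the pair $(q^*,(q^*)')$, Sobolev $\|\phi_u\|_{q^*}\le C\|\n\phi_u\|_q$, and the interpolation $\|\n\phi_u\|_q\le\|\n\phi_u\|_2^{2/q}$ using $\|\n\phi_u\|_\infty\le1$ --- matches the paper's proof verbatim.
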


\begin{proof}
Since $\|\n \phi_{u}\|_\infty\le 1$ and $q< 3$,
\begin{equation*}
\|\phi_u\|_{q^*}
\le C \|\n \phi_u\|_{q}
=C\left(\irt |\n \phi_u|^2 |\n \phi_u|^{q-2}\right)^\frac 1q
\le C \|\n \phi_u\|_{2}^\frac 2q,
\end{equation*}
so, by \eqref{eq:eneg} and being $2(q^*)'\in [2,6]$, we have
\begin{align*}
\|\n \phi_u\|_2^2
&\le C\irt\left( 1-\sqrt{1-|\n \phi_u|^2}\right)\le C\irt \phi_u u^2
\\
&\le C\|\phi_u\|_{q^*} \|u\|_{2(q^*)'}^2
\le C \|\n \phi_u\|_{2}^\frac 2q  \|u\|_{2(q^*)'}^2
\end{align*}
and we get the conclusion.
\end{proof}

We conclude this section showing that the radial weak solutions of \eqref{eq} are actually classical and satisfy a Pohozaev type identity.

\begin{proposition}\label{pr:rego}
If $(u,\phi)\in\HTr\times\X_r$ is a weak solution of \eqref{eq}, then both $u$ and $\phi$ are of class $C^2(\RT)$.
\end{proposition}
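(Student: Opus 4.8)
The plan is to prove regularity by a bootstrap argument applied separately to the two equations, exploiting the radial structure to get a head start. First I would treat the second equation for $\phi$. Since $(u,\phi)\in\HTr\times\X_r$ is a weak solution, we know $u\in\HTr\hookrightarrow L^q(\RT)$ for all $q\in[2,6]$ by the radial Sobolev embedding, so the right-hand side $u^2$ lies in $L^q$ for $q\in[1,3]$ and, being radial with polynomial decay, is in particular continuous away from the origin. The key point is that $\|\n\phi\|_\infty\le 1$ is a \emph{strict} inequality in the interior: by Lemma \ref{le:phiu} and the results of \cite{BDP} one expects $|\n\phi(x)|<1$ pointwise, so the operator $\dv\big(\n\phi/\sqrt{1-|\n\phi|^2}\big)$ is \emph{uniformly elliptic} on any compact set. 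I would then invoke standard elliptic regularity for this quasilinear operator (of mean-curvature type): writing the equation in non-divergence form, the coefficients depend smoothly on $\n\phi$, and since $\n\phi\in C^{0,1}$ already, De Giorgi--Nash--Moser together with Schauder estimates upgrade $\phi$ from Lipschitz to $C^{1,\alpha}$ and then, using $u^2\in C^0$, to $C^{2,\alpha}_{loc}$, hence $\phi\in C^2(\RT)$.

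Next I would treat the first equation for $u$. Rewrite it as $-\Delta u = |u|^{p-1}u - u - \phi u =: f$. From $u\in\HTr$ we have $u\in L^q$ for $q\in[2,6]$, and by point \eqref{it:embLinf} of Lemma \ref{lemma21} we already know $\phi\in L^\infty$, so $\phi u\in L^2\cap L^6$. The nonlinear term $|u|^{p-1}u$ with $p<5$ lies in $L^{6/p}$, and $6/p>6/5>1$. Thus $f\in L^{s}$ for some $s>1$, and Calderón--Zygmund (or $L^p$) elliptic estimates give $u\in W^{2,s}_{loc}$; Sobolev embedding then improves the integrability of $u$, feeding back into $f$. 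Iterating this bootstrap raises the exponent at each step until $f\in L^s_{loc}$ for $s>3/2$, at which point $W^{2,s}\hookrightarrow C^{0,\alpha}$ and $u$ becomes continuous and bounded. Once $u\in C^0$ and $\phi\in C^0$, the right-hand side $f$ is continuous, so Schauder estimates yield $u\in C^{2,\alpha}_{loc}$, hence $u\in C^2(\RT)$.

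The main obstacle is the regularity theory for the quasilinear $\phi$-equation, because the ellipticity degenerates precisely as $|\n\phi|\to 1$. The whole argument rests on knowing that this bound is never attained in the interior, i.e.\ $\|\n\phi\|_\infty<1$ locally uniformly, which is exactly what guarantees uniform ellipticity and lets the Born--Infeld operator behave like the (well-understood) mean-curvature operator. I would lean on the structural estimates already established in \cite{BDP} for this operator rather than reproving gradient bounds from scratch; the $u$-equation bootstrap, by contrast, is entirely routine once $\phi\in L^\infty$ is in hand. A minor technical point to address is the behaviour at the origin $x=0$, where radial symmetry could in principle create a singularity, but the $C^2$ conclusion follows there too since $u,\phi$ are even radial functions and the sources are continuous.
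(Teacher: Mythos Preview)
Your treatment of the $u$-equation coincides with the paper's: once $\phi\in L^\infty$ (from Lemma~\ref{lemma21}), a standard $L^p$-bootstrap followed by Schauder gives $u\in C^2$. The divergence is entirely in how the $\phi$-equation is handled.

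You propose to treat the Born--Infeld operator as a uniformly elliptic quasilinear operator and invoke De Giorgi--Nash--Moser plus Schauder. This rests on knowing \emph{a priori} that $|\nabla\phi|<1$ strictly and locally uniformly, which you phrase as something one ``expects'' from \cite{BDP}. That hedged language is exactly the gap: whether the gradient constraint is saturated is the delicate point for Born--Infeld-type operators, and you have not established it. There is also an ordering issue: to run Schauder on the $\phi$-equation you need $u^2$ H\"older, not merely continuous away from the origin via Strauss, so the two bootstraps would have to be interleaved rather than run sequentially as you describe.

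The paper avoids both difficulties by exploiting radiality explicitly rather than through general elliptic theory. It first cites \cite[Theorem~3.2]{BDP} to obtain $\phi\in C^1$, then bootstraps $u$ to $C^2$, and only then returns to $\phi$: writing the second equation as the radial ODE
\[
D\!\left(\frac{r^2\varphi'}{\sqrt{1-|\varphi'|^2}}\right)=-r^2u^2,
\]
it integrates to get $\varphi'(r)/\sqrt{1-|\varphi'(r)|^2}=:f(r)=-r^{-2}\int_0^r u^2(s)s^2\,ds$ and then inverts algebraically, $\varphi'=f/\sqrt{1+f^2}$. This formula delivers $|\varphi'|<1$ for free and reduces $\phi\in C^2$ (including at the origin) to checking $f\in C^1([0,\infty))$, which is an elementary limit computation once $u\in C^2$. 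No elliptic machinery is needed for $\phi$, and the behaviour at $r=0$ is settled directly rather than deferred as a ``minor technical point''.
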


\begin{proof}
Since $u\in \HTr$, by \cite[Theorem 3.2]{BDP} we deduce that $\phi\in C^1(\RT)$. Looking at the first equation in the system and by using a bootstrap argument,  we conclude that $u\in C^2(\RT)$. We define $\varphi:[0,+\infty[\to\R$ such that for any $r\ge 0:$ $\varphi(r)=\phi(|x|)$ where $x\in\RT$ is arbitrarily chosen in such a way that $|x|=r$.
\\
From now on, we proceed as in \cite[Lemma 1, page 329]{BL}. Since $\phi$ is radial and satisfies the second equation in a weak sense, we deduce that
    \begin{equation*}\label{eq:distribution}
        D\left(\frac{\varphi'r^2}{\sqrt{1-| \varphi'|^2}}\right)=-u^2r^2,\quad \hbox{in }(0,+\infty)
    \end{equation*}
where the symbol $D$ denotes the derivative in the sense of distributions.\\
Since on the right hand side we have a continuous function, the derivative actually has to be meant in the classical sense. So, integrating in $(0,r)$ and since $\varphi'(0)=0$,
    \begin{equation}\label{eq:distribution2}
        \frac{\varphi'(r)}{\sqrt{1-| \varphi'(r)|^2}}=-\frac 1{r^2}\int_0^r u^2(s)s^2 \,ds=:f(r)\in C^1\big((0,+\infty)\big).
    \end{equation}
On the one hand, by \eqref{eq:distribution2}, we deduce that, for $r>0$, we have
    \begin{equation*}
        f'(r)=\frac 2{r^3}\int_0^r u^2(s)s^2 \,ds-u^2(r)
    \end{equation*}
and then $\lim_{r\to 0}f'(r)=-\frac 13 u^2(0)$.\\
On the other hand, again by \eqref{eq:distribution2},
    \begin{equation*}
        \lim_{r\to 0}\frac{f(r)}{r}= \lim_{r\to 0}-\frac 1{r^3}\int_0^r u^2(s)s^2 \,ds=-\frac 13 u^2(0).
    \end{equation*}
We conclude that there exists $f'(0)$ and $\lim_{r\to 0}f'(r)=f'(0)$. Then $f\in C^1\big([0,+\infty)\big)$.
By some computations, by \eqref{eq:distribution2}, we have
    \begin{equation*}
        \varphi'(r)= \frac{f(r)} {\sqrt{1+f^2(r)}}\in C^1\big([0,+\infty)\big)
    \end{equation*}
and we are done.
\end{proof}

\begin{proposition}\label{pr:poho}
If $(u,\phi)$ is a solution of \eqref{eq} of class $C^2(\RT)$, then the following Pohozaev type identity is satisfied:
\begin{multline}\label{eq:poho}
\frac 12 \irt |\n u|^2+\frac 32 \irt u^2
+2 \irt \frac{|\n \phi|^2}{\sqrt{1-|\n \phi|^2}}
\\-\frac 32 \irt \left(1-\sqrt{1-|\n \phi|^2}\right)
=\frac{3}{p+1}\irt |u|^{p+1}.
\end{multline}
\end{proposition}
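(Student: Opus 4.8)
The plan is to derive the Pohozaev identity by the standard scaling argument: test each equation against the radial dilation generator $x\cdot\nabla$ applied to the relevant field, integrate by parts, and combine. Concretely, since $(u,\phi)$ is a $C^2$ solution, I would multiply the first equation $-\Delta u+u+\phi u=|u|^{p-1}u$ by $x\cdot\nabla u$ and integrate over $\RT$, and multiply the second equation $-\dv\big(\n\phi/\sqrt{1-|\n\phi|^2}\big)=u^2$ by $x\cdot\nabla\phi$ and integrate, then add the two resulting identities. Because direct integration over all of $\RT$ requires decay control, I would carry out the computation on balls $B_R$ and let $R\to\infty$, using the regularity and the decay at infinity encoded in $\X$ (Lemma \ref{lemma21}\eqref{it:C0}) together with the $H^1$ and $\D$ membership to kill the boundary terms along a suitable sequence $R_n\to\infty$.

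The key integration-by-parts formulas I would use are the standard ones in dimension three. For the Laplacian term, $\int_{B_R}(\Delta u)(x\cdot\n u)$ produces, after integration by parts, the bulk term $\tfrac12\int_{B_R}|\n u|^2$ with the sign dictated by the dimension $N=3$ (so the coefficient of $\int|\n u|^2$ is $(N-2)/2=1/2$), plus boundary contributions. For the lower-order terms $\int_{B_R} g(u)\,(x\cdot\n u)=\int_{B_R} x\cdot\n(G(u))$ with $G$ a primitive of $g$, integration by parts yields $-N\int_{B_R}G(u)$ up to boundary terms; applied to $g(u)=u$ this gives the coefficient $N/2=3/2$ on $\int u^2$, and applied to $g(u)=|u|^{p-1}u$ it gives the coefficient $N/(p+1)=3/(p+1)$ on $\int|u|^{p+1}$. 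For the Born-Infeld term I would set $W(t)=1-\sqrt{1-t}$ so that the nonlinear flux is $\n\phi\,W'(|\n\phi|^2)\cdot 2$ matched to the structure $\n\phi/\sqrt{1-|\n\phi|^2}$, and use that testing the second equation against $x\cdot\n\phi$ converts the divergence term into $\int \tfrac{|\n\phi|^2}{\sqrt{1-|\n\phi|^2}}$ and an $x\cdot\n$-of-a-scalar-density contribution which integrates to the $-\tfrac32\int(1-\sqrt{1-|\n\phi|^2})$ term. The coupling term $\int\phi u\,(x\cdot\n u)$ from the first equation and the term $\int u^2(x\cdot\n\phi)$ from the second equation should combine, after an integration by parts moving the derivative between $\phi$ and $u^2$, to cancel or recombine cleanly; this is exactly what produces the consistent coefficients in \eqref{eq:poho}.

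The main obstacle I expect is twofold. First, the boundary terms on $\partial B_R$: one must show that along some sequence $R_n\to\infty$ the surface integrals $\int_{\partial B_R}(\cdots)\,d\sigma$ vanish. For the $H^1$ pieces this follows from a standard argument (if $\int_0^\infty (\int_{\partial B_R}h)\,dR<\infty$ then $\liminf R\int_{\partial B_R}h=0$ along a sequence), and for the Born-Infeld flux one uses $\|\n\phi\|_\infty\le1$ to bound $\tfrac{|\n\phi|^2}{\sqrt{1-|\n\phi|^2}}$ only where the gradient stays bounded away from $1$, so some care is needed to control the flux near the light-cone constraint. Second, justifying the integrations by parts for the Born-Infeld operator requires the $C^2$ regularity established in Proposition \ref{pr:rego} together with the pointwise bound $|\n\phi|<1$ away from degeneracy, ensuring the integrand $\tfrac{|\n\phi|^2}{\sqrt{1-|\n\phi|^2}}$ is finite; integrability of this quantity over $\RT$ follows from \eqref{eq:phineh}. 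Assembling the two scaled identities and sending $R_n\to\infty$ then yields \eqref{eq:poho}, with the coefficient $2$ on $\int\tfrac{|\n\phi|^2}{\sqrt{1-|\n\phi|^2}}$ arising from combining the direct flux contribution with the coupling recombination.
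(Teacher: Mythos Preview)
Your approach is essentially the same as the paper's: multiply the first equation by $x\cdot\nabla u$, the second by $x\cdot\nabla\phi$, integrate over $B_R$, handle the boundary terms as $R\to\infty$ \`a la Berestycki--Lions, and combine. One clarification worth making explicit: the coefficient $2$ in front of $\int\tfrac{|\nabla\phi|^2}{\sqrt{1-|\nabla\phi|^2}}$ does not come purely from the $x\cdot\nabla$ testing. After substituting the second scaled identity into the first, you are left with a residual $\tfrac32\int\phi u^2$ from the coupling term (via the analogue of your $\int\phi u(x\cdot\nabla u)=-\tfrac12\int u^2(x\cdot\nabla\phi)-\tfrac32\int\phi u^2+\text{bdry}$), and it is precisely \eqref{eq:phineh} that converts this into $\tfrac32\int\tfrac{|\nabla\phi|^2}{\sqrt{1-|\nabla\phi|^2}}$, giving $\tfrac12+\tfrac32=2$. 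So \eqref{eq:phineh} is used as an algebraic substitution, not merely to ensure integrability.
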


\begin{proof}
Arguing as in \cite{DM2}, for every $R>0$, we have
\begin{align}
\int_{B_R}\!\! -\Delta u (x\cdot\nabla u)&= - \frac{1}{2}\int_{B_R}\!\!|\nabla u|^2-
\frac{1}{R}\int_{\partial B_R} \!\!|x\cdot\nabla u|^2 + \frac{R}{2}\int_{\partial B_R}\!\! |\nabla u|^2, \label{eq:pohoBR1}
\\
\int_{B_R} u (x\cdot\nabla u)&=-\frac 32\int_{B_R} u^2 + \frac R2\int_{\partial B_R} u^2, \label{eq:pohoBR2}
\\
\int_{B_R} \phi u (x\cdot\nabla u)&=- \frac{1}{2} \int_{B_R} u^2 (x\cdot \nabla\phi)- \frac{3}{2}\int_{B_R}\phi u^2
+\frac{R}{2}\int_{\partial B_R} \phi u^2, \label{eq:pohoBR3}
\\
\int_{B_R} |u|^{p-1}u (x\cdot\nabla u)&=-\frac 3{p+1}\int_{B_R} |u|^{p+1} + \frac R{p+1}\int_{\partial B_R} |u|^{p+1}, \label{eq:pohoBR4}
\end{align}
where $B_R$ is the ball of $\RT$ centered in the origin and with radius $R$.
\\
Moreover, denoting by $\d _{ij}$ the Kronecker symbols, since for any $i,j=1,2,3$,
\begin{align*}
\int_{B_R}\!\!\!\de_i\left(\dfrac{\de_i \phi}{\sqrt{1-|\n \phi|^2}}\right)\!x_j \ \de_j \phi
&=-\int_{B_R}\dfrac{\de_i \phi\ \de_j \phi}{\sqrt{1-|\n \phi|^2}} \d_{ij}
-\int_{B_R}\dfrac{\de_i \phi\ \de^2_{i,j}\phi}{\sqrt{1-|\n \phi|^2}} x_j
\\
&\quad+\int_{\de B_R}\dfrac{\de_i \phi\ \de_j \phi}{\sqrt{1-|\n \phi|^2}} \frac{x_i x_j}{|x|},
\end{align*}
we have
\begin{align}
\int_{B_R}\!\!\!\!
-\dv &\left(\dfrac{\n \phi}{\sqrt{1-|\n \phi|^2}}\right) \!(x\cdot \nabla\phi)
=-\sum_{i,j=1}^3 \int_{B_R}\de_i\left(\dfrac{\de_i \phi}{\sqrt{1-|\n \phi|^2}}\right)x_j \de_j \phi \nonumber
\\
&=\int_{B_R}\dfrac{|\n \phi|^2}{\sqrt{1-|\n \phi|^2}}
+\sum_{j=1}^3\int_{B_R}\de_j \left(1-\sqrt{1-|\n \phi|^2}\right) x_j \nonumber
\\
&\quad-\sum_{i,j=1}^3\int_{\de B_R}\dfrac{\de_i \phi\ \de_j \phi}{\sqrt{1-|\n \phi|^2}} \frac{x_i x_j}{|x|} \nonumber
\\
&=\int_{B_R}\dfrac{|\n \phi|^2}{\sqrt{1-|\n \phi|^2}}
-3\int_{B_R} \left(1-\sqrt{1-|\n \phi|^2}\right) \nonumber
\\
&\quad
+R\int_{\de B_R} \left(1-\sqrt{1-|\n \phi|^2}\right)  -\sum_{i,j=1}^3\int_{\de B_R}\dfrac{\de_i \phi\ \de_j \phi}{\sqrt{1-|\n \phi|^2}} \frac{x_i x_j}{|x|}. \label{eq:pohoBR5}
\end{align}
Multiplying the first equation of \eqref{eq} by $x\cdot\nabla u$ and the second equation
by $x\cdot\nabla\phi$ and integrating on $B_R$, by \eqref{eq:pohoBR1}, \eqref{eq:pohoBR2}, \eqref{eq:pohoBR3}, \eqref{eq:pohoBR4} and \eqref{eq:pohoBR5} we get, respectively,
\begin{align}
- \frac{1}{2}\int_{B_R}&\!\!|\nabla u|^2-
\frac{1}{R}\int_{\partial B_R} \!\!|x\cdot\nabla u|^2 + \frac{R}{2}\int_{\partial B_R}\!\! |\nabla u|^2
-\frac 32\int_{B_R} u^2 + \frac R2\int_{\partial B_R} u^2 \nonumber
\\
&\quad- \frac{1}{2} \int_{B_R} u^2 (x\cdot \nabla\phi)- \frac{3}{2}\int_{B_R}\phi u^2
+\frac{R}{2}\int_{\partial B_R} \phi u^2\nonumber
\\
&=-\frac 3{p+1}\int_{B_R} |u|^{p+1} + \frac R{p+1}\int_{\partial B_R} |u|^{p+1}, \label{eq:po1}
\end{align}
and
\begin{align}
\int_{B_R} u^2 (x\cdot \n \phi)
&=\int_{B_R}\dfrac{|\n \phi|^2}{\sqrt{1-|\n \phi|^2}}
-3\int_{B_R} \left(1-\sqrt{1-|\n \phi|^2}\right) \nonumber
\\
&\quad
+R\int_{\de B_R} \left(1-\sqrt{1-|\n \phi|^2}\right)  -\sum_{i,j=1}^3\int_{\de B_R}\dfrac{\de_i \phi\ \de_j \phi}{\sqrt{1-|\n \phi|^2}} \frac{x_i x_j}{|x|}. \label{eq:po2}
\end{align}
Substituting \eqref{eq:po2} into \eqref{eq:po1}, since all the boundary integrals go to zero as $R\to +\infty$ (we can repeat the arguments of \cite{BL}), by \eqref{eq:phineh} we get the conclusion.
\end{proof}

\section{Proofs of the main results}\label{se:pr }

Using an idea from \cite{J,struwe}, we look for bounded Palais-Smale
sequences of the following perturbed functionals
\begin{equation*}
I_\l(u)=\frac 12 \irt (|\n u|^2+u^2)
+\frac 12 \irt \phi_u u^2
-\frac 12 \irt \left(1-\sqrt{1-|\n \phi_u|^2}\right)
-\frac \l{p+1}\irt |u|^{p+1},
\end{equation*}
for almost all $\l$ near $1$. Then we will deduce the existence of
a non-trivial critical point $v_\l$ of the functional $I_\l$ at
the mountain pass level. Afterward, we study the convergence of
the sequence $(v_\l)_\l$, as $\l$ goes to 1 (observe that
$I_1=I$).
\\
We begin applying a slightly modified version of the monotonicity trick
due to \cite{J,struwe}.

\begin{proposition}\label{prop:mt}
Let $\big(X,\|\cdot\|\big)$ be a Banach space and $J\subset\R^+$ an interval.
Consider a family of $C^1$ functionals $I_{\lambda}$ on $X$ defined by
\begin{equation*}
I_\l(u)=A(u)- \l B(u), \qquad \hbox{for} \ \l\in J,
\end{equation*}
with $B$ non-negative and either $A(u)\to + \infty$ or
$B(u)\to+\infty$ as $\|u\|\to+\infty$ and such that $I_\l(0)=0$.
For any $\l\in J$, we set
\begin{equation*}
\Gamma_\l:=\{\gamma\in C([0,1],X)\mid \gamma(0)=0, \ I_\l(\gamma(1))< 0\}.
\end{equation*}
Assume that for every $\l\in J$, the set $\G_\l$ is non-empty and
\begin{equation*} \label{eq:cl}
c_\l:=\inf_{\gamma\in\Gamma_\l}\max_{t\in[0,1]} I_\l(\gamma(t)) >0.
\end{equation*}
Then for almost every $\l\in J$, there is a sequence $(v_n)_n  \subset X$ such that
\begin{itemize}
\item[\rm(i)] $(v_n)_n $ is bounded in $X$;
\item[\rm(ii)] $I_\l(v_n)\to c_\l$, as $n\to +\infty$;
\item[\rm(iii)] $I_\l'(v_n)\to 0$ in the dual space $X^{-1}$ of $X$, as $n\to +\infty$.
\end{itemize}
\end{proposition}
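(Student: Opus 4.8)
The plan is to follow the monotonicity trick of Jeanjean and Struwe. The starting point is that, since $B\ge 0$, for $\mu<\lambda$ in $J$ one has $I_\lambda(u)=A(u)-\lambda B(u)\le A(u)-\mu B(u)=I_\mu(u)$ for every $u\in X$; hence $\Gamma_\mu\subseteq\Gamma_\lambda$ (if $I_\mu(\gamma(1))<0$ then $I_\lambda(\gamma(1))\le I_\mu(\gamma(1))<0$) and, taking infima of $\max_t I_\lambda(\gamma(t))$ over these nested path classes, $c_\lambda\le c_\mu$. Thus $\lambda\mapsto c_\lambda$ is non-increasing on $J$ and therefore differentiable at almost every $\lambda\in J$ by Lebesgue's theorem on monotone functions. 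I would fix such a point of differentiability $\lambda$ and build the bounded Palais--Smale sequence there; this is exactly what produces the ``for almost every $\lambda$'' in the statement.

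Next I would pick an increasing sequence $\lambda_n\nearrow\lambda$ in $J$ and, for each $n$, an almost optimal path $\gamma_n\in\Gamma_{\lambda_n}$ with $\max_{t}I_{\lambda_n}(\gamma_n(t))\le c_{\lambda_n}+(\lambda-\lambda_n)$. Since $I_\lambda\le I_{\lambda_n}$ and $\Gamma_{\lambda_n}\subseteq\Gamma_\lambda$, along each $\gamma_n$ the maximum of $I_\lambda$ lies between $c_\lambda$ and $c_\lambda+\varepsilon_n$ with $\varepsilon_n\to 0$ (using continuity of $c$ at $\lambda$). The crucial quantitative input is a uniform bound on the ``high-value'' points of the paths: if $u=\gamma_n(t)$ satisfies $I_\lambda(u)\ge c_\lambda-(\lambda-\lambda_n)$, then writing $I_\lambda(u)=I_{\lambda_n}(u)-(\lambda-\lambda_n)B(u)$ and combining the two bounds above yields
$$B(u)\le \frac{c_{\lambda_n}-c_\lambda}{\lambda-\lambda_n}+2,$$
whose right-hand side stays bounded as $n\to\infty$ precisely because $c$ is differentiable at $\lambda$ (the quotient tends to $-c'(\lambda)$). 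Consequently $A(u)=I_\lambda(u)+\lambda B(u)$ is bounded above as well, and the hypothesis that one of $A,B$ diverges to $+\infty$ as $\|u\|\to+\infty$ forces $\|u\|\le K$ for a constant $K=K(\lambda)$ independent of $n$.

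With this localization in hand I would extract the Palais--Smale sequence by a deformation argument run by contradiction. Suppose no bounded Palais--Smale sequence at level $c_\lambda$ exists; then there are $\alpha,\delta>0$ with $\|I_\lambda'(u)\|\ge\alpha$ on the region $\{\|u\|\le K+1\}\cap\{|I_\lambda(u)-c_\lambda|\le 3\delta\}$. On this region one constructs a locally Lipschitz pseudo-gradient field for the $C^1$ functional $I_\lambda$ and, cutting it off to vanish outside a bounded neighbourhood and away from the level $c_\lambda$, generates a flow $\eta$ that does not increase $I_\lambda$, fixes the points where $I_\lambda$ is far from $c_\lambda$ (in particular the endpoints $\gamma_n(0)=0$, where $I_\lambda=0<c_\lambda$, and $\gamma_n(1)$, where $I_\lambda<0$), and pushes the values in $[c_\lambda-\delta,c_\lambda+\delta]$ strictly below $c_\lambda$. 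Applying $\eta$ to $\gamma_n$ for $n$ large enough that $\varepsilon_n<\delta$ produces a path still lying in $\Gamma_\lambda$ whose maximal $I_\lambda$-value is strictly below $c_\lambda$, contradicting the definition of $c_\lambda$. Hence a bounded Palais--Smale sequence exists, which gives (i)--(iii).

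The step I expect to be the main obstacle is this last deformation: unlike in the classical mountain pass framework there is no global compactness or global coercivity available, so the pseudo-gradient flow and its cutoff must be confined to the bounded high-value region isolated in the previous paragraph. The buffer between $K$ and $K+1$, together with the smallness of $\varepsilon_n$, is exactly what guarantees that the flow, which moves points by a controlled amount, neither escapes the region where the lower bound $\|I_\lambda'\|\ge\alpha$ holds nor disturbs the endpoints of the path.
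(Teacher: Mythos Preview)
Your proposal is correct: it reproduces the standard Jeanjean--Struwe argument (monotonicity of $\lambda\mapsto c_\lambda$, Lebesgue differentiability, the quotient bound $B(u)\le (c_{\lambda_n}-c_\lambda)/(\lambda-\lambda_n)+2$ on near-optimal pieces of the paths, and the localized pseudo-gradient deformation), and the sketch of the deformation step with the buffer $K$ versus $K+1$ is the right way to handle the lack of global coercivity.

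As for comparing with the paper: there is nothing to compare, because the paper does not prove this proposition at all. Proposition~\ref{prop:mt} is stated as a black box, with the phrase ``a slightly modified version of the monotonicity trick due to \cite{J,struwe}'' serving as its entire justification; the authors then immediately specialize it to $X=\HTr$ and the specific $A$, $B$ at hand. So your write-up is not a different route from the paper's proof --- it \emph{is} the proof the paper defers to the cited references, and matches Jeanjean's original argument.
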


In our case $X=\HTr$
\begin{align*}
A(u)&=\frac 12 \irt (|\n u|^2+u^2)
+\frac 12 \irt \phi_u u^2
-\frac 12 \irt \left(1-\sqrt{1-|\n \phi_u|^2}\right),
\\
B(u)&=\frac 1{p+1}\irt |u|^{p+1}.
\end{align*}
Observe that, by \eqref{eq:eneg}, $A(u)\to +\infty$ as $\|u\|\to +\infty$.

\begin{proposition}\label{pr:gamma}
For all $\l \in [1/2,1]$, the set $\G_\l $ is not empty.
\end{proposition}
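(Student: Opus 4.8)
The plan is to exhibit a single continuous path, valid simultaneously for every $\l\in[1/2,1]$, along which $I_\l$ becomes negative at the endpoint. Fix any radial $u\in\HTr\setminus\{0\}$ and consider the simplest useful scaling, the amplitude dilation $u_t:=tu$ for $t\ge 0$ (the more general curve $t^\a u(t^\b\cdot)$ from the Introduction would also work, but is not needed here). Since $\tau\mapsto \tau t_0 u$ is manifestly continuous from $[0,1]$ into $\HTr$ and vanishes at $\tau=0$, it will suffice to produce a single $t_0>0$ with $I_\l(t_0 u)<0$ for all $\l\in[1/2,1]$; then $\gamma(\tau):=\tau t_0 u$ belongs to $\G_\l$. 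Moreover, writing $I_\l=A-\l B$ with $B(u)=\frac{1}{p+1}\irt|u|^{p+1}\ge 0$, the map $\l\mapsto I_\l(t_0 u)$ is nonincreasing, so it is enough to arrange $I_{1/2}(t_0 u)<0$, and this I would obtain by showing that $I_{1/2}(u_t)\to-\infty$ as $t\to+\infty$.

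The core of the argument is to control the nonlocal Born--Infeld contribution, for which no closed form of $\phi_{u_t}$ is available. Recalling $I_\l(u)=\frac12\irt(|\n u|^2+u^2)-\frac{\l}{p+1}\irt|u|^{p+1}-\frac12 E(u,\phi_u)$ and that $\irt\big(1-\sqrt{1-|\n\phi_u|^2}\big)\ge 0$, the key inequality \eqref{eq:eneg} yields the one-sided bound $-\frac12 E(u,\phi_u)\le \frac12\irt \phi_u u^2$. I would then invoke Lemma \ref{le:q}: for a fixed $q\in[2,3)$, Hölder's inequality together with the embedding estimates established in its proof give $\irt\phi_u u^2\le \|\phi_u\|_{q^*}\|u\|_{2(q^*)'}^2\le C\|u\|_{2(q^*)'}^{2q/(q-1)}$. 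Evaluating at $u_t=tu$, where all the remaining terms are exact powers of $t$, produces
\[
I_{1/2}(u_t)\le \frac{t^2}{2}\big(\|\n u\|_2^2+\|u\|_2^2\big)+\frac{C}{2}\,t^{\frac{2q}{q-1}}\|u\|_{2(q^*)'}^{\frac{2q}{q-1}}-\frac{1}{2(p+1)}\,t^{p+1}\|u\|_{p+1}^{p+1}.
\]

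It then remains only to compare the three exponents $2$, $\tfrac{2q}{q-1}$ and $p+1$. The function $q\mapsto \tfrac{2q}{q-1}$ is strictly decreasing on $[2,3)$, with values in $(3,4]$ and infimum $3$; hence for $p>2$, and in particular for $p\in(5/2,5)$, one may choose $q\in[2,3)$ with $\tfrac{2q}{q-1}<p+1$. Since also $2<p+1$, the negative term $-t^{p+1}$ dominates and $I_{1/2}(u_t)\to-\infty$, so a sufficiently large $t_0$ gives $I_{1/2}(t_0 u)<0$ and, by the monotonicity in $\l$, $I_\l(t_0 u)<0$ for every $\l\in[1/2,1]$; the path $\gamma(\tau)=\tau t_0 u$ then lies in $\G_\l$ for all such $\l$. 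The only genuine obstacle is the step opening the middle paragraph: because $\phi_{u_t}$ is neither explicit nor homogeneous, the term $\irt\phi_{u_t}u_t^2$ cannot be computed, and the whole construction rests on replacing it by the controllable upper bound coming from \eqref{eq:eneg} and Lemma \ref{le:q}. It is precisely this estimate, rather than an exact scaling identity, that governs which powers $p$ are admissible.
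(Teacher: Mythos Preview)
Your proof is correct and follows essentially the same route as the paper: both use the amplitude dilation $u_t=tu$ and control the Born--Infeld term $\irt\phi_u u^2$ through Lemma~\ref{le:q}, then compare powers of $t$. Your bound is in fact slightly sharper than the paper's---by working directly in the $L^{2(q^*)'}$ norm you get the exponent $\tfrac{2q}{q-1}$ (infimum $3$), whereas the paper passes through the $H^1$ norm and obtains $\tfrac{3q-2}{q-1}$ (infimum $7/2$), so your argument nominally covers all $p>2$ rather than only $p>5/2$.
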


\begin{proof}
Fix $\l \in [1/2,1]$ and $u\in \HTr\setminus\{0\}$, then, by Lemma \ref{le:q} and for $q\in [2,3)$, we have
\begin{align*}
I_\l(u)&\le \frac 12 \| u\|^2
+\frac 12 \irt \phi_u u^2
-\frac \l{p+1}\|u\|_{p+1}^{p+1}
\\
&\le \frac 12 \| u\|^2
+c \| \phi_u\|_{6} \|u\|_{\frac{12}{5}}^2
-\frac \l{p+1}\|u\|_{p+1}^{p+1}
\\
&\le \frac 12 \| u\|^2
+c \|\n \phi_u\|_{2} \|u\|^2
-\frac \l{p+1}\|u\|_{p+1}^{p+1}
\\
&\le \frac 12 \| u\|^2
+c \|u\|^{\frac{3q-2}{q-1}}
-\frac \l{p+1}\|u\|_{p+1}^{p+1}.
\end{align*}
Therefore, if $\l \in [1/2,1]$ and $u\in \HTr\setminus\{0\}$ and $t>0$, we infer that
\[
I_\l(tu)\le c_1 t^2 +c_2 t^{\frac{3q-2}{q-1}}-c_3 \l t^{p+1}.
\]
Since $p\in (5/2,5)$, we can find $q\in[2,3)$ such that $I_\l(tu)<0$, for $t$ sufficiently large.
\end{proof}

\begin{proposition}\label{pr:a}
For any $\l \in [1/2,1]$, there exist $\a>0$ and $\rho>0$, sufficiently small, such that $I_\l(u)\ge \a $, for all $u\in H^{1}(\mathbb R^{3})$, with $\|u\|=\rho$.
As a consequence $c_\l \ge \a $.
\end{proposition}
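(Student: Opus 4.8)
The plan is to exploit the splitting $I_\lambda=A-\lambda B$ and to show that near the origin the quadratic part of $A$ dominates the superquadratic nonlinearity $B$, uniformly for $\lambda\in[1/2,1]$. The crucial first step is to dispose of the awkward $\phi_u$-terms. Writing
\[
A(u)=\frac12\|u\|^2+\frac12\irt \phi_u u^2-\frac12\irt\Big(1-\sqrt{1-|\n\phi_u|^2}\Big),
\]
I would invoke \eqref{eq:eneg} (point \eqref{it:min} of Lemma \ref{le:phiu}), which says precisely that $\irt\phi_u u^2\ge\irt\big(1-\sqrt{1-|\n\phi_u|^2}\big)$. Hence the last two terms are non-negative and $A(u)\ge\frac12\|u\|^2$. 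This is the only place where the specific structure of the problem enters, and I expect it to be the conceptual crux: the term one would naively fear, the coupling with $\phi_u$, in fact carries a favorable sign and can simply be discarded.

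Second, I would estimate $B$ by Sobolev. Since $p\in(5/2,5)$ gives $p+1\in(7/2,6)\subset[2,6]$, the embedding $\HT\hookrightarrow L^{p+1}(\RT)$ yields $\|u\|_{p+1}\le C\|u\|$. Using also $\lambda\le 1$, I obtain for every $u\in\HT$
\[
I_\lambda(u)\ge\frac12\|u\|^2-\frac{1}{p+1}\|u\|_{p+1}^{p+1}\ge\|u\|^2\Big(\frac12-\frac{C^{p+1}}{p+1}\|u\|^{p-1}\Big).
\]
Because $p-1>0$, I can fix $\rho>0$ so small that $\frac{C^{p+1}}{p+1}\rho^{p-1}\le\frac14$; then on the sphere $\|u\|=\rho$ one has $I_\lambda(u)\ge\frac14\rho^2=:\alpha>0$, and this bound is independent of $\lambda\in[1/2,1]$.

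Finally, for the consequence $c_\lambda\ge\alpha$, I would run the standard mountain-pass separation argument. The same inequality shows $I_\lambda(u)\ge\frac14\|u\|^2\ge 0$ for all $\|u\|\le\rho$. Thus any $\gamma\in\Gamma_\lambda$, which satisfies $I_\lambda(\gamma(1))<0$, must have $\|\gamma(1)\|>\rho$; since $\gamma(0)=0$, the continuous map $t\mapsto\|\gamma(t)\|$ attains the value $\rho$ at some $t^*\in(0,1)$ by the intermediate value theorem, whence $\max_{t}I_\lambda(\gamma(t))\ge I_\lambda(\gamma(t^*))\ge\alpha$. Taking the infimum over $\gamma$ gives $c_\lambda\ge\alpha$. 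No genuine obstacle is expected beyond the sign observation of the first step; the remaining estimates are the routine mountain-pass geometry.
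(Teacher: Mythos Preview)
Your proof is correct and follows exactly the approach the paper intends: the paper's own proof consists of the single sentence ``The conclusion follows easily by Lemma \ref{le:phiu},'' which is precisely your use of \eqref{eq:eneg} to discard the $\phi_u$-terms, followed by the standard Sobolev estimate and mountain-pass separation. There is nothing to add.
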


\begin{proof}
The conclusion follows easily by Lemma \ref{le:phiu}.
\end{proof}

\begin{proposition}\label{pr:ael}
For almost every $\l \in J$, there exists $u_\l \in \HTr$, $u_\l\neq 0$, such that $I'_\l(u_\l) = 0$ and $I_\l(u_\l) = c_\l $.
\end{proposition}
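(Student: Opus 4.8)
The plan is to feed the abstract monotonicity trick into the compactness available in the radial class. First I would verify that the hypotheses of Proposition \ref{prop:mt} are in force on $X=\HTr$ and $J=[1/2,1]$: $B$ is nonnegative, $A(u)\to+\infty$ as $\|u\|\to+\infty$ by \eqref{eq:eneg}, and $I_\l(0)=0$; Propositions \ref{pr:gamma} and \ref{pr:a} supply $\Gamma_\l\ne\emptyset$ and $c_\l\ge\alpha>0$. Proposition \ref{prop:mt} then yields, for almost every $\l\in J$, a bounded Palais--Smale sequence $(v_n)_n\subset\HTr$ with $I_\l(v_n)\to c_\l$ and $I_\l'(v_n)\to 0$. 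Fix such a $\l$. By boundedness, up to a subsequence $v_n\rightharpoonup u_\l$ in $\HTr$, and the compact embedding $\HTr\hookrightarrow\hookrightarrow L^s(\RT)$ for $s\in(2,6)$ gives $v_n\to u_\l$ strongly in every such $L^s$; in particular for $s=p+1$ (recall $p+1\in(7/2,6)$) and for $s=12/5$.

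The heart of the matter is to promote this to strong convergence in $\HTr$. I would test the difference $I_\l'(v_n)-I_\l'(u_\l)$ against $v_n-u_\l$. On one side, $I_\l'(v_n)[v_n-u_\l]\to0$ since $I_\l'(v_n)\to0$ and $(v_n-u_\l)_n$ is bounded, while $I_\l'(u_\l)[v_n-u_\l]\to0$ since $v_n-u_\l\rightharpoonup0$ and $I_\l'(u_\l)$ is a fixed continuous functional. On the other side, using the expression of $I_\l'$ from Proposition \ref{prop:IC1}, the leading quadratic terms of the difference combine to $\|v_n-u_\l\|^2$, and the remaining contribution is
\[
-\l\irt\big(|v_n|^{p-1}v_n-|u_\l|^{p-1}u_\l\big)(v_n-u_\l)+\irt\big(\phi_{v_n}v_n-\phi_{u_\l}u_\l\big)(v_n-u_\l).
\]
The first integral vanishes in the limit by the strong $L^{p+1}$-convergence. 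For the nonlocal one I would split
\[
\phi_{v_n}v_n-\phi_{u_\l}u_\l=\phi_{v_n}(v_n-u_\l)+(\phi_{v_n}-\phi_{u_\l})u_\l
\]
and estimate by H\"older: the first piece is bounded by $\|\phi_{v_n}\|_6\,\|v_n-u_\l\|_{12/5}^2\to0$ (the $L^6$-norms are bounded through Lemma \ref{le:q} and Lemma \ref{lemma21}, and $12/5\in(2,6)$ gives the strong convergence), while the second is bounded by $\|\phi_{v_n}-\phi_{u_\l}\|_\infty\,\|u_\l\|_2\,\|v_n-u_\l\|_2\to0$, since $\|\phi_{v_n}-\phi_{u_\l}\|_\infty\to0$ by Remark \ref{rem:continua} and the remaining factors are bounded. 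Hence $\|v_n-u_\l\|^2\to0$, i.e.\ $v_n\to u_\l$ strongly in $\HTr$.

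With strong convergence in hand, continuity of $I_\l$ and $I_\l'$ gives $I_\l(u_\l)=\lim_n I_\l(v_n)=c_\l$ and $I_\l'(u_\l)=\lim_n I_\l'(v_n)=0$ in $(\HTr)^*$; since the symmetric criticality argument of Proposition \ref{pr:criticalita} applies verbatim to $I_\l$, $u_\l$ is in fact a critical point of $I_\l$. Finally $u_\l\ne0$, for otherwise $c_\l=I_\l(u_\l)=I_\l(0)=0$ would contradict $c_\l\ge\alpha>0$. I expect the only genuinely delicate point to be the passage to the limit in the nonlocal term $\irt\phi_{v_n}v_n(v_n-u_\l)$: this is exactly where the radial restriction is used, since it is the compactness of $\HTr\hookrightarrow L^s$ for $s\in(2,6)$, combined with the $L^\infty$-continuity $w\mapsto\phi_w$ of Remark \ref{rem:continua}, that forces the cross terms to vanish; the pure power nonlinearity, by contrast, is handled routinely once this compactness is available.
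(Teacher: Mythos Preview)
Your proposal is correct and follows essentially the same route as the paper: invoke Propositions \ref{pr:gamma} and \ref{pr:a} to apply the monotonicity trick, extract a bounded Palais--Smale sequence in $\HTr$, and use radial compactness together with the $L^\infty$-continuity of $w\mapsto\phi_w$ (Remark \ref{rem:continua}) to upgrade to strong $\HTr$-convergence. The only cosmetic difference is in how strong convergence is obtained: the paper first shows $u_\l$ is a critical point by passing to the limit in $I_\l'(u_n)[v]$, then compares $I_\l'(u_n)[u_n]=o_n(1)$ with $I_\l'(u_\l)[u_\l]=0$ to deduce $\|u_n\|\to\|u_\l\|$, whereas you test $\big(I_\l'(v_n)-I_\l'(u_\l)\big)[v_n-u_\l]$ directly; both are standard and rely on the same ingredients.
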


\begin{proof}

By Propositions \ref{pr:gamma} and \ref{pr:a} we can apply the monotonicity trick (Proposition~\ref{prop:mt}) and we argue that, for almost every $\l \in J$ there exists a bounded Palais-Smale sequence $(u_n)_n\subset \HTr$ for the functional $I_\l $ at level $c_\l $, namely as $n \to +\infty$,
\[
I_\l(u_n)\to c_\l, \qquad I_\l'(u_n)\to 0.
\]
Fix such a $\l \in J$. Exploiting compactness results holding for $\HTr,$ we have that there exists $u_\l\in\HTr$
such that, up to subsequences,
\begin{align}
u_n\rightharpoonup u_\l\;&\hbox{ weakly in }\HTr,\label{eq:weak}
\\
u_n\to u_\l\;&\hbox{ in }L^s(\RT),\; 2<s<6, \nonumber 
\\
u_n\to u_\l\;&\hbox{ a.e. in }\RT. \nonumber 
\end{align}
By \cite[Remark 5.5]{BDP}, we infer that $\phi_n:=\phi_{u_n} \to \phi_{u_\l}=:\phi_\l$, weakly in $\D$ (and uniformly in $\RT$) so we conclude that, for every $v\in\HTr,$
	$$\lim_n I'_\l(u_n)[v]=I'_\l(u_\l)[v]=0$$
that is $u_\l$ is a critical point of $I_\l$.\\
Moreover, since the following convergence holds
\begin{equation}\label{eq:convergence}
\left|\irt \phi_n u_n^2 - \irt \phi_\l u_\l ^2\right|
\le \left|\irt \phi_n (u_n^2 -  u_\l ^2)\right|
+\left|\irt (\phi_n  - \phi_\l) u_\l ^2\right| \xrightarrow[n\to +\infty]{} 0,
\end{equation}
taking into account that $I'_\l(u_n)[u_n]=o_n(1)$ and $I'_\l(u_\l)[u_\l]=0$, by Proposition \ref{prop:IC1} it follows
\begin{align*}
\lim_n \|u_n\|^2
=\lim_n \left(\irt |u_n|^{p+1}-\irt \phi_{u_n}u_n^2\right)
=\irt |u_\l|^{p+1}-\irt \phi_{\l}u_\l^2
=\|u_\l\|^2.
\end{align*}
By this and \eqref{eq:weak} we deduce that $u_n\to u_\l$ in $\HTr$ and then, by \eqref{eq:convergence},
$$0<c_\l=\lim_n I_\l(u_n)=I_\l(u_\l)$$
which concludes the proof.
\end{proof}

Now we are ready to prove our main result.

\begin{proof}[Proof of Theorem \ref{main}]
By Proposition \ref{pr:ael}, there exists a sequence $(\l_n)_n\subset J$ such that $\l_n \nearrow 1$ and, for all $n\in \N$, there exists $u_n\in \HTr\setminus \{0\}$ such that
\begin{align}
& I_{\l_n}(u_n)=c_{\l_n}, \label{eq:cln}
\\
& I_{\l_n}'(u_n)=0 \quad \hbox{in }(H^{1}(\mathbb R^{3}))'. \nonumber
\end{align}
For the sake of brevity, we will denote $\phi_n:=\phi_{u_n}$.  By \eqref{eq:phineh}, \eqref{eq:poho} and since $I'_{\l_n}(u_n)[u_n]=0$, we have
\begin{align*}
\frac 12 \irt |\n u_n|^2+\frac 32 \irt u_n^2
+2 \irt \frac{|\n \phi_n|^2}{\sqrt{1-|\n \phi_n|^2}}
-\frac 32 \irt \left(1-\sqrt{1-|\n \phi_n|^2}\right)
&=\frac{3\l_n}{p+1}\irt |u_n|^{p+1}
\\
\irt |\n u_n|^2+\irt u_n^2
+ \irt \frac{|\n \phi_n|^2}{\sqrt{1-|\n \phi_n|^2}}
&=\l_n\irt |u_n|^{p+1}.
\end{align*}
Multiplying the first equation by $\a /3$ and the second one by $\b /(p+1)$ and summing, we have
\begin{multline*}
\frac{(\a +\b)\l_n}{p+1}\irt |u_n|^{p+1}
=\left(\frac \a6 +\frac{\b}{p+1}\right) \irt |\n u_n|^2
+\left(\frac \a2 +\frac{\b}{p+1}\right) \irt u_n^2
\\
\quad+\left(\frac{2\a}3 +\frac{\b}{p+1}\right) \irt \frac{|\n \phi_n|^2}{\sqrt{1-|\n \phi_n|^2}}
-\frac \a2 \irt \left(1-\sqrt{1-|\n \phi_n|^2}\right).
\end{multline*}
Assuming, in particular, $\a =1-\b $, we get
\begin{multline}
\frac{\l_n}{p+1}\irt |u_n|^{p+1}
=\left(\frac 16 +\frac{\b(5-p)}{6(p+1)}\right) \irt |\n u_n|^2
+\left(\frac 12 +\frac{\b(1-p)}{2(p+1)}\right) \irt u_n^2
\\
\quad+\left(\frac{2}3 +\frac{\b(1-2p)}{3(p+1)}\right) \irt \frac{|\n \phi_n|^2}{\sqrt{1-|\n \phi_n|^2}}
-\left(\frac 12-\frac \b2\right) \irt \left(1-\sqrt{1-|\n \phi_n|^2}\right). \label{eq:unp1}
\end{multline}
Therefore, since for all $t\in [0,1[$
\[
1-\sqrt{1-t}\le \frac 12 \frac{t}{\sqrt{1-t}},
\]
substituting \eqref{eq:unp1} into \eqref{eq:cln}, we have
\begin{align*}
c_{\ln}&=I_{\ln}(u_n)
=\left(\frac 13 -\frac{\b(5-p)}{6(p+1)}\right) \irt |\n u_n|^2
+\frac{\b(p-1)}{2(p+1)}\irt u_n^2
\\
&\quad+\left(\frac{\b(2p-1)}{3(p+1)}-\frac{1}6 \right) \irt \frac{|\n \phi_n|^2}{\sqrt{1-|\n \phi_n|^2}}
-\frac \b2 \irt \left(1-\sqrt{1-|\n \phi_n|^2}\right)
\\
&\ge \left(\frac 13 -\frac{\b(5-p)}{6(p+1)}\right) \irt |\n u_n|^2
+\frac{\b(p-1)}{2(p+1)}\irt u_n^2
\\
&\quad+\left(\frac{\b(2p-1)}{3(p+1)}-\frac{1}6 -\frac \b4 \right) \irt \frac{|\n \phi_n|^2}{\sqrt{1-|\n \phi_n|^2}}.
\end{align*}
Since $p>2$, there exists a constant $\b $ such that all the coefficients in the previous inequality are positive and so, by the boundedness of $(c_{\ln})_n $ (indeed the map $\l \mapsto c_\l$ is non-increasing), we infer the boundedness of the sequence $(u_n)_n$ in $\HTr$, too.
\\
Now, arguing similarly as in the proof of Proposition \ref{pr:ael}, we can easily prove the existence of a nontrivial critical point $u$ of $I$. Hence we have
\[
\mathcal{S}_r:=\left\{u\in\HTr\setminus\{0\}\mid
        I'(u)=0\right\}\neq \emptyset.
\]
Moreover, any $u\in \mathcal{S}_r$ satisfies
\[
\|u\|^2\le \irt |\n u|^2+\irt u^2
+ \irt \frac{|\n \phi_u|^2}{\sqrt{1-|\n \phi_u|^2}}
=\irt |u|^{p+1}\le C\|u\|^{p+1},
\]
and therefore
\[
\inf_{u\in \mathcal{S}_r}\|u\|>0.
\]
Since we have that $I(u)\ge c \|u\|^2$ for all $u \in \mathcal{S}_r,$ we conclude that
\[
\sigma_r:=\inf_{u\in \mathcal{S}_r} I(u)>0.
\]
Let $(u_n)_n\subset\mathcal{S}_r$ such that $I(u_n)\to \sigma_r.$
Arguing as before we have that the
sequence is bounded. Finally, as in the proof of Proposition \ref{pr:ael}, there exists $u\in\HTr$ critical point of $I$
such that, up to subsequences, $u_n\to u$ in $\HTr$. Then $(u,\phi_u)$ is a radial ground state solution by Proposition \ref{prop:solution}.
\\
Finally, by Proposition~\ref{pr:rego} we conclude that $u$ and $\phi_u$ are of class $C^2(\RT)$.
\end{proof}

\end{document}